\def\A{{\mathcal A}}
\renewcommand{\phi}{\varphi}
\def\Z{{\mathbb Z}}
\def\Q{{\mathbb Q}}
\def\F{{\mathbb F}}
\def\pair{{\langle\ ,\ \rangle}}
\def\sqpair{{[ \ , \ ]}}
\def\Hom{{\operatorname{Hom}}}
\def\rank{{\operatorname{rank}}}
\def\Sym{{\operatorname{Sym}}}
\def\Aut{{\operatorname{Aut \;}}}
\def\GL{{\operatorname{GL}}}
\def\Sur{{\operatorname{Sur}}}
\def\Jac{{\operatorname{Jac}}}
\def\coker{{\operatorname{coker}}}
\def\ra{{\rightarrow}}
\newtheorem{thm}{Theorem}
\newtheorem{conj}{Conjecture}
\newtheorem{prop}[thm]{Proposition}
\newtheorem{lemma}[thm]{Lemma}
\newtheorem{heuristic}[thm]{Heuristic}
\theoremstyle{remark}
\newtheorem{rem}{Remark}
\title{On a Cohen-Lenstra Heuristic for Jacobians of Random Graphs}
\author[Julien Clancy, Nathan Kaplan, Timothy Leake, Sam Payne, Melanie Wood]{Julien Clancy, Nathan Kaplan, Timothy Leake, Sam Payne, Melanie Matchett Wood}
\begin{document}

\begin{abstract}
In this paper, we make specific conjectures about the distribution of Jacobians of random graphs with their canonical duality pairings.  Our conjectures are based on a Cohen-Lenstra type heuristic saying that a finite abelian group with duality pairing appears with frequency inversely proportional to the size of the group times the size of the group of automorphisms that preserve the pairing.  
We conjecture that the Jacobian of a random graph is cyclic with probability a little over $.7935$.
We determine the values of several other statistics on Jacobians of random graphs that would follow from our conjectures.    In support of the conjectures, we prove that random symmetric matrices over $\Z_p$, distributed according to Haar measure, have cokernels distributed according to the above heuristic.
We also give experimental evidence in support of our conjectures. 
\end{abstract}

\maketitle

\section{Introduction}

Jacobians of graphs are often cyclic.  A similar phenomenon has been observed in class groups of imaginary quadratic fields, where it is conjecturally explained by the classical Cohen-Lenstra heuristic, in which a group $\Gamma$ appears with frequency proportional to $1/ \# \Aut \Gamma $.  Jacobians of 
Erd\H{o}s--R\'{e}nyi 
random graphs also seem to exhibit some of the deeper properties predicted by this heuristic. For instance, we have observed empirically that the average size of the Jacobian of a random graph modulo $p$ tends to 2, for all primes $p$, matching \cite{CohenLenstra84}.  However, we have also observed that the odd part of the Jacobian of a random graph is cyclic with probability close to $.946$, which does not match the classical Cohen-Lenstra heuristic prediction that the odd part of a random abelian group should be cyclic with probability a little over $.9775$.  This paper shows how these and other observed phenomena are explained by a natural variation on the Cohen-Lenstra heuristic, proposed in \cite{CLP13}, based on the fact that the Jacobian of a graph carries a canonical duality pairing.

\begin{heuristic}[\!\!\cite{CLP13}]\label{H:Main}
A group $\Gamma$ with pairing $\delta$ occurs as a Jacobian of a random graph with frequency proportional to $$
\frac{1}{ \# \Gamma \cdot \# \Aut(\Gamma,\delta)} \; ,
$$
where $\Aut(\Gamma,\delta)$ denotes the group of automorphisms of $\Gamma$ that respect the pairing $\delta$.
\end{heuristic}

\noindent In the present paper, we use this heuristic to make precise conjectures and compute predicted averages based on these conjectures for many specific statistics. 
The Jacobian of a graph is the torsion part of the cokernel of its Laplacian matrix, which is a symmetric matrix, and we also prove
results about random symmetric matrices distributed according to Haar measure in $\Z_p$, showing their cokernels are distributed according to Heuristic~\ref{H:Main}.  We also present empirical data to support the conjectures, in Section~\ref{S:Data}.

\subsection{The pairing}
Recall that a duality pairing on a finite abelian group $\Gamma$ is a symmetric bilinear map $\delta:\ \Gamma \times \Gamma \rightarrow \Q/\Z$ such that the induced map $g\rightarrow \langle g,\ \rangle$ is an isomorphism from $\Gamma$ to $\Hom(\Gamma,\Q/\Z)$.  The cokernel of a nonsingular symmetric integer matrix $A$ carries a canonical duality pairing, induced by
\[
\langle x , y \rangle = y^t A^{-1} x.
\]
More generally, the torsion part of the cokernel of any symmetric integer matrix carries a canonical duality pairing.  The Jacobian of a graph occurs naturally in this way, as the torsion subgroup of the cokernel of the combinatorial Laplacian. See \cite{Shokrieh10} for a detailed discussion of the duality pairing on graph Jacobians and its relation to the Grothendieck pairing, or monodromy pairing, on component groups of N\'eron models.  

\subsection{Conjectures}

Let $G(n,q)$ be the Erd\H{o}s--R\'{e}nyi random graph on $n$ vertices, where each edge is included independently with probability $q$, for some fixed probability $0 < q < 1$.  In other words, $G(n,q)$ is the probability space on graphs with $n$ vertices in which a graph $G$ with $e$ edges appears with probability $q^{e} \cdot (1-q)^{{n \choose 2} - e}$.  Here we study the associated probability space on isomorphism classes of finite abelian groups with duality pairing, 
\[
\Gamma(n,q) = \Jac(G(n,q)),
\]
in which the measure of a subset is the probability that the Jacobian of a random graph in $G(n,q)$ lies in that subset.

Let $\A(m)$ be the set of all isomorphism classes of pairs $(\Gamma,\delta)$, where $\Gamma$ is an abelian group of order $m$ and $\delta$ is a duality pairing on $\Gamma$.  Our first conjecture is the analog of Cohen and Lenstra's Fundamental Assumption 8.1 for their heuristics on class groups of number fields \cite{CohenLenstra84} .

\begin{conj}\label{C:basic}
 Let $F$ be a function on isomorphism classes of finite abelian groups with duality pairings that is either bounded or depends only on the Sylow $p$-subgroups of $F$ for finite many $p$.  
Then
\[
\lim_{n \rightarrow \infty} \mathbb{E}(F(\Gamma(n,q))) = \lim_{n\rightarrow \infty}
\frac{\sum_{m=1}^n \sum_{(\Gamma,\delta)\in \A(m)}  \frac{F(\Gamma,\delta)}{ \# \Gamma \cdot \# \Aut(\Gamma,\delta)}}{\sum_{m=1}^n \sum_{(\Gamma,\delta)\in \A(m)}  \frac{1}{\# \Gamma \cdot \# \Aut(\Gamma,\delta)}}
\; .
\]
\end{conj}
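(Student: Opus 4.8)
The plan is to deduce the conjecture from the companion result stated in the introduction, that Haar-random symmetric matrices over $\Z_p$ have cokernels distributed according to Heuristic~\ref{H:Main}. Since $\Jac(G)$ is the torsion of the cokernel of the combinatorial Laplacian $L(G)$, and since deleting one row and the corresponding column of $L(G)$ produces a square symmetric matrix $\widetilde{L}(G)$ that is nonsingular with cokernel $\Jac(G)$ together with its canonical duality pairing for connected $G$ (the case of full measure as $n\to\infty$), the entire problem is to compare the reduced Laplacian of an Erd\H{o}s--R\'enyi graph with a Haar-random symmetric matrix. First I would use the hypothesis on $F$—bounded, or depending on finitely many Sylow subgroups—to reduce to showing that, for each prime $p$ and each $k$, the joint distribution of the Sylow $p$-subgroup of $\Gamma(n,q)$ with its pairing, truncated at exponent $p^k$, converges to the corresponding marginal of the measure in Heuristic~\ref{H:Main}. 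This is the standard reduction that lets one work one prime at a time and deploy a moment argument.

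Second, I would set up the moment method for groups with pairings. For a fixed finite abelian $p$-group with duality pairing $(H,\sigma)$, the relevant moment is $\mathbb{E}\big[\#\Sur\big((\Gamma(n,q),\delta),(H,\sigma)\big)\big]$, the expected number of surjections carrying the canonical pairing to $\sigma$ in the appropriate sense. Expanding by linearity over candidate maps $\phi:\Z^{n-1}\twoheadrightarrow H$ turns each moment into a sum of probabilities of the form $\Pr\big[\phi\circ \widetilde{L}(G)=0 \text{ and } \sigma\text{-compatibility}\big]$, i.e. the probability that the columns of the reduced Laplacian lie in $\ker\phi$ and that the induced pairing agrees with $\sigma$. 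The target is to show these moments converge to the values produced by the Haar-random symmetric model, which the cited result already computes; a moment-determinacy theorem for distributions on groups-with-pairing (of Sawin--Wood type) then upgrades convergence of all moments to convergence in distribution. Since the right-hand side of Conjecture~\ref{C:basic} is precisely $\mathbb{E}_{\mu}[F]$ for the probability measure $\mu$ of Heuristic~\ref{H:Main}, this convergence is exactly the asserted limit.

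The third and decisive step is the universality input: showing that $\widetilde{L}(G)\bmod p^k$ equidistributes, as $n\to\infty$, in the same way as the Haar model, despite the three structural features distinguishing a graph Laplacian from a generic symmetric matrix—its off-diagonal entries are Bernoulli($q$) rather than Haar-uniform, its diagonal is determined by the off-diagonal entries through the vertex degrees, and the full Laplacian has the all-ones vector in its kernel (removed by passing to $\widetilde{L}$, but leaving a residual linear relation to track). I would attack this with Fourier analysis over $(\Z/p^k)^n$: writing each $\Pr[\phi\circ\widetilde{L}=0]$ as an average of additive characters evaluated at the Bernoulli edge variables, and bounding the non-principal character sums via anticoncentration for sums of independent Bernoulli variables. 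Because of the symmetry and the degree–diagonal coupling, each edge variable appears in a controlled pattern, so the character sum factors over edges into local factors that must be shown negligible unless $\phi$ is very special, isolating a main term that matches the Haar computation.

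I expect this equidistribution estimate to be the main obstacle, and it is why the statement remains a conjecture rather than a theorem. The difficulty is that $q$ is a fixed constant, so the edge variables are generally far from uniform modulo $p$ (they are uniform when $p=2$ and $q=1/2$, but biased for general $p$ and $q$), and one must show this bias washes out at the level of cokernel moments uniformly in $n$. Controlling the contribution of near-degenerate $\phi$, together with the zero-row-sum relation, requires delicate large-deviation and rank estimates for random symmetric matrices over $\Z/p^k$ that go beyond the Haar case. Establishing these bounds—rather than the comparatively formal moment and determinacy steps—is where the real work lies.
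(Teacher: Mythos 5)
There is no proof of this statement in the paper to compare against: Conjecture~\ref{C:basic} is stated as a conjecture, and the paper's support for it consists of Theorem~\ref{thm:pparts} (the Haar random symmetric matrix analogue), the computations of predicted averages in Section~\ref{S:averages}, and the empirical data of Section~\ref{S:Data}. Your proposal is therefore not being measured against an in-paper argument but against the open problem itself, and as a proof it does not close. You have correctly identified the right strategy --- reduce to one prime at a time, compute moments $\mathbb{E}\bigl[\#\Sur(\Gamma(n,q),\Gamma')\bigr]$, match them to the Haar symmetric model of Theorem~\ref{thm:pparts}, and invoke a moment-determinacy result --- and this is essentially the route taken by the fifth author in \cite{Wood14}, which proves the conjecture for functions $F$ depending on finitely many Sylow $p$-subgroups (with Theorem~\ref{thm:pparts} as an ingredient). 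But the step you yourself flag as ``the main obstacle,'' namely the universality/equidistribution estimate showing that the reduced Laplacian of $G(n,q)$ modulo $p^k$ has the same cokernel moments as a Haar symmetric matrix despite the Bernoulli$(q)$ entries and the degree--diagonal coupling, is precisely the content of the hard theorem in \cite{Wood14}; sketching that character sums ``must be shown negligible unless $\phi$ is very special'' is a statement of the problem, not a solution to it.

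Two further gaps are worth naming. First, your opening reduction is not valid for the full hypothesis of Conjecture~\ref{C:basic}: a bounded $F$ need not depend on finitely many Sylow subgroups, so convergence of each truncated $p$-part marginal does not by itself control $\mathbb{E}(F(\Gamma(n,q)))$ --- one needs a tightness statement preventing escape of mass to large primes or large exponents, and this is exactly the regime where the conjecture remains open even after \cite{Wood14}. Second, your moments are defined as surjections ``carrying the canonical pairing to $\sigma$''; the determinacy theory for such pairing-decorated moments is not the standard one, and the published argument instead recovers the pairing data through the universality theorem itself rather than through pairing-respecting surjection counts. Neither issue is fatal to the strategy, but both mean the proposal is an outline of a research program rather than a proof.
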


Since we include all bounded test functions $F$, Conjecture~\ref{C:basic} thus includes a claim in the spirit of weak convergence. However, note that neither side can be expressed as the evaluation of $F$ against some measure $\nu$ on the set of finite abelian groups, since when $F$ is the characteristic function of a group, the left-hand side of Conjecture~\ref{C:basic} is $0$ \cite[Corollary 9.3]{Wood14}, and from the product of Proposition~\ref{P:normalizing} over all $p$, if follows the right-hand side is $0$, which would contradict countable additivity of $\nu$.  Even though Cohen and Lenstra said any non-negative test function $F$ should ``probably'' be included in their analogous conjecture on class groups \cite[8.1]{CohenLenstra84}, it is likely that is too much to hope for in the case of class groups, and it is definitely too much to hope for in the case of Jacobians of random graphs.  For example, \cite[Theorem 5]{GJRWW} shows that $(\Z/2\Z)^k$ is never a Jacobian of a graph.  So if we take a function $F$ supported on these groups and growing fast enough that the limit on the right-hand side is positive, 
Conjecture~\ref{C:basic} would fail for that $F$.

Any finite abelian group with pairing splits as an orthogonal direct sum of its Sylow $p$-subgroups, and many interesting functions, such as the indicator function of the set of cyclic groups with pairing, depend only on their values on the Sylow $p$-subgroups.  One important special case is where the function $F$ depends only on the Sylow $p$-subgroup of $\Gamma$ with its restricted pairing, for a single fixed prime $p$.  In this case, Conjecture
~\ref{C:basic} implies the following, as in \cite[Proposition 5.6]{CohenLenstra84}:
\begin{equation}\label{E:pSylow}
\lim_{n \rightarrow \infty} \mathbb{E}(F(\Gamma(n,q))) = 
\frac{\sum_{m=0}^\infty \sum_{(\Gamma,\delta)\in \A(p^m)}  \frac{F(\Gamma,\delta)}{\# \Gamma\cdot \# \Aut(\Gamma,\delta)}}{\sum_{m=0}^\infty \sum_{(\Gamma,\delta)\in \A(p^m)}  \frac{1}{\# \Gamma\cdot \# \Aut(\Gamma,\delta)}}
.
\end{equation}
In Proposition~\ref{P:normalizing}, we show that the denominator of the right-hand side above converges to $\prod_{i=1}^\infty (1-p^{1-2i})^{-1}$.  Therefore, we can put a measure $\mu$ on
\[
\A_p =\bigcup_m \A(p^m)
\]
so that $$\mu(\Gamma,\delta) =\frac{\prod_{i=1}^\infty (1-p^{1-2i})}{\# \Gamma\cdot \# \Aut(\Gamma,\delta)}.$$  
(Note that as explained above, there is no way to have an analogous measure of the set of all finite abelian groups.)
 Then, if $F$ depends only on the Sylow $p$-subgroup with its restricted pairing, Conjecture~\ref{C:basic} says that
\begin{equation}\label{E:int}
\lim_{n \rightarrow \infty} \mathbb{E}(F(\Gamma(n,q))) = \int_{(\Gamma,\delta)\in \A_p} F(\Gamma,\delta) d\mu.
\end{equation}
As with the classical Cohen-Lenstra heuristic, different functions $F$ give rise to estimates for various statistics on random finite abelian $p$-groups with duality pairings, distributed according to Heuristic~\ref{H:Main}.  

In Section~\ref{S:averages}, we compute the integral on the right-hand side above for several interesting functions $F$, the indicator function for the set of groups with trivial $p$-part, the indicator function for groups with cyclic $p$-part, the number of surjections onto a fixed group, and the functions $p^{k r_p(\Gamma)}$, where $r_p(\Gamma)$ is the $p$-rank of $\Gamma$, i.e. the rank of the free $\Z/p\Z$-module $\Gamma \otimes \Z/p\Z$.  For example, in Theorem~\ref{T:momint}, we show that
if $\Gamma'=\prod_{i=1}^r \Z/p^{e_i}\Z$ with $e_1\le e_2 \le \cdots \le e_r$ then 
$$\int_{(\Gamma,\delta)\in\A_p} \#\Sur(\Gamma,\Gamma') d\mu =p^{(r-1)e_1 + (r-2)e_2+\cdots + e_{r-1}}.$$

\begin{rem}
While this paper was in preparation, the fifth author proved Conjecture~\ref{C:basic} for many functions $F$ that depend on only finitely many Sylow $p$-subgroups of $\Gamma$ \cite{Wood14}.  In particular, our predictions are now confirmed for $F$ the indicator function for the set of groups with a given Sylow $p$-subgroup, the indicator function for groups with cyclic $p$-part, the number of surjections onto a fixed group, and   $p^{k r_p(\Gamma)}$.  Theorem~\ref{thm:pparts} from this paper is an ingredient in the proof.
\end{rem}

In Proposition~\ref{P:cyclic}, we show that Conjecture~\ref{C:basic} implies that the asymptotic probability that that Sylow $p$-subgroup of the Jacobian of a random graph is cyclic is $\prod_{i=1}^{\infty} (1-p^{-1-2i})$.  Taking the product over all primes $p$, we are led to the following conjecture.  

\begin{conj}\label{C:cyc}
The probability that the Jacobian of $G(n,q)$ is cyclic tends to
\[
\prod_{p} {\prod_{i=1}^\infty (1 - p^{-1-2i})}=\zeta(3)^{-1}\zeta(5)^{-1}\zeta(7)^{-1}\zeta(9)^{-1}\cdots
\] 
as $n$ goes to infinity, where $\zeta(s)$ is the Riemann zeta function.
\end{conj}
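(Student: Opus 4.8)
The plan is to reduce the statement to the single-prime result of Proposition~\ref{P:cyclic} and then assemble the contributions of all primes. Recall that a finite abelian group $\Gamma$ is cyclic if and only if each of its Sylow $p$-subgroups is cyclic, so the indicator function of cyclicity factors as
\[
\mathbf{1}[\Gamma \text{ cyclic}] = \prod_p \mathbf{1}[\text{Sylow }p\text{-subgroup of }\Gamma\text{ is cyclic}].
\]
Proposition~\ref{P:cyclic} supplies, for each fixed $p$ and assuming Conjecture~\ref{C:basic}, the asymptotic probability $\prod_{i=1}^\infty(1-p^{-1-2i})$ that the Sylow $p$-subgroup is cyclic. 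The heart of the argument is to show that these single-prime events become asymptotically independent, so that the probability that they all occur simultaneously is the product over $p$ of the single-prime probabilities.

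First I would truncate: fix a bound $P$ and consider the bounded test function $F_P=\prod_{p\le P}\mathbf{1}[\text{Sylow }p\text{-subgroup cyclic}]$, which depends only on the Sylow $p$-subgroups for the finitely many primes $p\le P$. Applying Conjecture~\ref{C:basic} to $F_P$, and using that the weight $\#\Gamma\cdot\#\Aut(\Gamma,\delta)$ is multiplicative across Sylow subgroups so that the right-hand side factors over the primes $p\le P$ (reducing to the single-prime computation of Proposition~\ref{P:cyclic} in each factor), gives
\[
\lim_{n\to\infty}\mathbb{E}\bigl(F_P(\Gamma(n,q))\bigr)=\prod_{p\le P}\prod_{i=1}^\infty(1-p^{-1-2i}).
\]
It then remains to let $P\to\infty$ and to identify the resulting infinite product. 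For the latter, since $\zeta(s)^{-1}=\prod_p(1-p^{-s})$, interchanging the order of the two products (legitimate by absolute convergence) yields
\[
\prod_p\prod_{i=1}^\infty(1-p^{-1-2i})=\prod_{i=1}^\infty\prod_p(1-p^{-(1+2i)})=\prod_{i=1}^\infty\zeta(1+2i)^{-1}=\zeta(3)^{-1}\zeta(5)^{-1}\zeta(7)^{-1}\cdots,
\]
which is the claimed value.

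The main obstacle is the interchange of the two limits $n\to\infty$ and $P\to\infty$. Since $\mathbf{1}[\Gamma\text{ cyclic}]\le F_P(\Gamma)$ pointwise, the difference is controlled by the event that some Sylow $p$-subgroup with $p>P$ is noncyclic, giving the uniform bound
\[
0\le \mathbb{E}\bigl(F_P(\Gamma(n,q))\bigr)-\mathbb{E}\bigl(\mathbf{1}[\Gamma(n,q)\text{ cyclic}]\bigr)\le \sum_{p>P}\mathbb{P}\bigl[\text{Sylow }p\text{-subgroup of }\Gamma(n,q)\text{ noncyclic}\bigr].
\]
The limiting single-prime probability of noncyclicity is $1-\prod_{i=1}^\infty(1-p^{-1-2i})=O(p^{-3})$, so the tail sum over $p>P$ is summable and tends to $0$ as $P\to\infty$ \emph{in the limit over $n$}; the delicate point, not available directly from Conjecture~\ref{C:basic} as stated, is to obtain such a bound \emph{uniformly in $n$}. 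This uniform control over the large primes—in effect, that for finite $n$ the Sylow $p$-part is noncyclic with probability decaying like $p^{-3}$—is the one genuinely new ingredient needed beyond the single-prime computation of Proposition~\ref{P:cyclic} and the elementary Euler-product manipulation above, and is the reason the statement is recorded as a conjecture rather than a theorem.
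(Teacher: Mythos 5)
Your derivation matches the paper's: the statement is a conjecture precisely because, as you note, the interchange of the limits $n\to\infty$ and $P\to\infty$ (uniform control of the noncyclicity probability at large primes) is not available, and the paper simply takes the single-prime value $\prod_{i=1}^\infty(1-p^{-1-2i})$ from Proposition~\ref{P:cyclic} and multiplies over all primes to arrive at the conjectured constant. Your truncation argument, the factorization of the weight over Sylow subgroups, and the Euler-product identification with $\zeta(3)^{-1}\zeta(5)^{-1}\cdots$ are all correct and, if anything, make the heuristic derivation more explicit than the paper does.
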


\noindent This product converges to approximately $.7935$.  Note that Wagner has made other conjectures on statistics of Jacobians of random graphs \cite[Conjectures 4.2, 4.3, 4.4]{Wagner00}, and his conjecture for the asymptotic probability that the Jacobian is cyclic differs from Conjecture~\ref{C:cyc}.

\begin{rem}
One could also consider stronger versions of these conjectures, allowing the probability $q$ to vary with $n$.  If $q$ is too close to 0, then the graph will have very few edges.  In particular, if it has no cycles, then the Jacobian will be trivial.  Similarly, if $q$ is too close to $1$, then $G(n,q)$ will be very close to the complete graph, whose Jacobian is $(\Z/n\Z)^{n-2}$.  Since the $p$-rank of a group changes by at most 1 when an edge is added or deleted \cite[Lemma~5.3]{Lorenzini89}, the Jacobian of $G(n,q)$ will rarely be cyclic when $q$ is too close to 1.  It is natural to expect that versions of these conjectures will hold for $G(n,q(n))$ provided that $q(n) \log(n)$ and $(1- q(n)) \log (n)$ both go to infinity.
\end{rem}

\begin{rem}
The idea of a variation of the classical Cohen-Lenstra heuristic that involves $\# \Gamma$ and the number of automorphisms preserving a bilinear form has appeared earlier, in other contexts.  Cohen and Lenstra already considered a heuristic for class groups of real quadratic fields in which $\Gamma$ appears with frequency proportional to $1/(\# \Gamma\cdot \# \Aut \Gamma)$ \cite{CohenLenstra84, CohenLenstra84b}, and Delaunay studied variations on this heuristic for groups with alternating pairings \cite{Delaunay01, Delaunay07}.  He proposed a heuristic for Tate-Shafarevich groups, which are only conjecturally finite but, when finite, carry a canonical non-degenerate alternating bilinear form $\beta$ \cite{Cassels62}.  In Delaunay's heuristic for Tate-Shafarevich groups of rank $0$ elliptic curves, a finite abelian group with non-degenerate alternating bilinear form $(\Gamma,\beta)$ appears with frequency proportional to $\# \Gamma/ \# \Aut(\Gamma,\beta)$ where $\Aut(\Gamma,\beta)$ is the subgroup of $\Aut(\Gamma)$ consisting of automorphisms that respect the alternating form.  One slight increase in subtlety in our case is that a finite abelian group can carry several non-isomorphic duality pairings \cite{BannaiMunemasa98}, while a group with a non-degenerate alternating bilinear form carries exactly one isomorphism class of such forms.
\end{rem}

\begin{rem}
We do not know any obvious explanation for the exact form of Heuristic~\ref{H:Main}, but some natural analogies are at least suggestive of a relation to the heuristic of Cohen and Lenstra for class groups of real quadratic fields.  Delaunay's heuristic for Tate-Shafarevich groups is motivated by an analogy relating the Mordell-Weil group of an elliptic curve $E$ over $\Q$ to the group of units in a number field.  The analogy readily extends to graphs, with the Mordell-Weil group of an elliptic curve corresponding to the full cokernel of the combinatorial Laplacian, which is a finitely generated abelian group of rank $1$.  The rank $1$ case for elliptic curves corresponds to the case of real quadratic fields, where Cohen and Lenstra predict that a group $\Gamma$ appears with frequency proportional to $1/ ( \# \Gamma \cdot \# \Aut \Gamma)$.  The difference for Jacobians of graphs is that we consider only automorphisms that respect the duality pairing.
\end{rem}

\subsection{Result for Haar random symmetric matrices}

In support of the above conjectures, we prove the following, where a random $p$-adic symmetric matrix stands in for the graph Laplacian.  For a ring $R$, let $\Sym_n(R)$ denote the additive group of symmetric $n\times n$ matrices with coefficients in $R$.

\begin{thm} \label{thm:pparts}
Let $\Gamma$ be a finite abelian $p$-group of rank $r$ with a duality pairing $\delta$, and let $A$ be a random $n \times n$ symmetric matrix with respect to additive Haar measure on $\Sym_n(\Z_p)$.  Then the probability that the cokernel of $A$ with its given duality pairing is isomorphic to $(\Gamma, \delta)$ is
\[
\mu_n(\Gamma,\delta) = \frac{\prod_{j=n-r+1}^n (1-p^{-j}) \prod_{i=1}^{\lceil(n-r)/2\rceil} (1-p^{1-2i})}{\# \Gamma \cdot \# \Aut(\Gamma,\delta)},
\]
where $\Aut(\Gamma,\delta)$ is the set of automorphisms of $\Gamma$ that preserve the pairing $\delta$.  In particular,
\[
\lim_{n\rightarrow \infty} \mu_n(\Gamma,\delta)  = \frac{\prod_{i=1}^\infty (1-p^{1-2i})}{\# \Gamma \cdot \# \Aut(\Gamma,\delta)}.
\]
\end{thm}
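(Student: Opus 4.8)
The plan is to compute the Haar measure of the set $\{A \in \Sym_n(\Z_p) : (\coker A, \delta_A) \cong (\Gamma,\delta)\}$ by an induction on $n$ that peels off one layer of the $p$-adic structure at a time. First I would record two facts that make the problem tractable. The cokernel pairing $\delta_A$ is precisely the discriminant form of the $\Z_p$-lattice $(\Z_p^n, A)$: writing $L = \Z_p^n$ and $L^\ast = A^{-1}L$ for the dual lattice, one has $(\coker A, \delta_A) \cong (L^\ast/L,\ \text{induced form})$, so the quantity we want is the probability that a random integral symmetric bilinear form on $\Z_p^n$ has a prescribed discriminant form. Second, the congruence action $A \mapsto U^t A U$ of $U \in \GL_n(\Z_p)$ preserves additive Haar measure on $\Sym_n(\Z_p)$ (its Jacobian is $|\det U|_p^{n+1} = 1$) and carries $(\coker A, \delta_A)$ to an isomorphic pair; hence the event we are measuring is a union of congruence classes, and we are free to normalize $A$ by congruence whenever convenient.

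The inductive step is driven by conditioning on $\bar A = A \bmod p$. Since $\coker A \otimes \F_p = \coker \bar A$, the $p$-rank condition $r_p(\coker A) = r$ forces $\bar A$ to have corank exactly $r$; this happens with a probability $S_{n,r}$ that is a known explicit rational function of $p$ obtained by counting symmetric matrices of each rank over $\F_p$. Conditioned on corank $r$, a congruence lifting a splitting of $\bar A$ puts $A$ in the block form $A_1 \perp pC'$, where $A_1 \in \Sym_{n-r}(\Z_p)$ is unimodular and $C' \in \Sym_r(\Z_p)$ is again Haar-distributed; one checks the pushforward of Haar measure makes $C'$ uniform after averaging over the choice of radical. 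Because $A_1$ is unimodular it contributes trivially to the discriminant form, so $(\coker A, \delta_A) \cong \text{disc}(pC')$, and the problem descends to the $r \times r$ matrix $pC'$. Scaling a form by $p$ shifts its Jordan decomposition up by one level, so the unimodular part of $C'$ (of rank $n_1$, the number of $\Z/p\Z$ summands of $\Gamma$) produces exactly the $\Z/p\Z$-part of $(\Gamma,\delta)$, while the remaining, non-unimodular part carries the rest. Iterating this splitting peels $\Gamma$ one cyclic level at a time down to the trivial group, whose base case is the probability $\prod_{i=1}^{\lceil(n-r)/2\rceil}(1-p^{1-2i})$ that a symmetric matrix over $\F_p$ is nonsingular. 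Assembling the per-level factors $S_{\bullet,\bullet}$ with these unimodular probabilities is what should produce the two explicit products $\prod_{j=n-r+1}^n(1-p^{-j})$ and $\prod_{i=1}^{\lceil(n-r)/2\rceil}(1-p^{1-2i})$.

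The main obstacle is the pairing-and-automorphism bookkeeping: one must show that the factors accumulated by the recursion collapse to exactly $1/(\#\Gamma \cdot \#\Aut(\Gamma,\delta))$. This has two sources of difficulty. At each level the unimodular part that is split off carries a discriminant in $\Z_p^\ast/(\Z_p^\ast)^2$, and one must track which square classes produce each isomorphism class of the restricted pairing on that level; it is this matching that inserts the Cohen--Lenstra weight $\#\Aut$ rather than a bare group count, already visible in the factor $\tfrac12$ when $\Gamma = \Z/p\Z$, coming from the square class of the single unit. More seriously, $\#\Aut(\Gamma,\delta)$ is not just the product of the automorphism groups of the individual levels: it also records the maps between distinct Jordan levels that preserve $\delta$, and these cross terms must be produced by the interaction of consecutive steps of the recursion. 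I expect the cleanest way to organize this is to compare the recursion against the order of the orthogonal group of $(\Gamma,\delta)$ directly, equivalently to phrase the count as a $p$-adic mass formula in which $\#\Aut(\Gamma,\delta)$ appears as the stabilizer of a congruence class; verifying that the Jacobian and volume factors match the stated products is then the real content. Finally, the prime $p=2$ requires separate care, since symmetric $\Z_2$-forms need not be diagonalizable and the classification of their discriminant forms is more delicate; I would treat it as a parallel case with the same recursive skeleton but the $2$-adic normal forms substituted at each level.
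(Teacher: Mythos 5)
Your outline takes a genuinely different route from the paper's. You propose a level-by-level Jordan-splitting recursion (condition on the corank of $\overline A$, split off a unimodular block, rescale by $p$, recurse), in the style of a $p$-adic mass formula. The paper instead works in one shot: it fixes a symmetric pairing $\sqpair$ on $\Z_p^n$ whose cokernel with induced pairing is $(\Gamma,\delta)$, counts how many such pairings there are by identifying a pairing-plus-trivialization with a surjection $\Z_p^n\to\Gamma$, so that the count is $\#\Sur(\Z_p^n,\Gamma)/\#\Aut(\Gamma,\delta)$ (Lemma~\ref{lem:numpairings} --- this is where $\#\Aut(\Gamma,\delta)$ enters, cleanly and exactly once), and then computes the probability that $\pair_A$ equals the fixed pairing via the criterion of \cite[Lemma~3.2]{BKLPR13}: $\pair_A=\pair_M$ iff $A=M+MRM$ and $\rank \overline A=\rank\overline M$. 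After normalizing $M$ (diagonal for odd $p$, $1\times1$ and $2\times2$ blocks for $p=2$), the first condition fixes initial $p$-adic digits and contributes $\#\Gamma^{-(n+1)}$, and the second is independent and contributes the proportion of invertible matrices in $\Sym_{n-r}(\F_p)$.

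As written, though, your proposal has a genuine gap, and you have put your finger on it yourself: nothing in the recursion actually produces the factor $1/(\#\Gamma\cdot\#\Aut(\Gamma,\delta))$. The per-level data you accumulate (coranks and square classes of the unimodular constituents) would, if the measure-theoretic steps were completed, determine the distribution of the underlying group $\Gamma$ and of the discriminants of its Jordan constituents, but $\#\Aut(\Gamma,\delta)$ is not multiplicative over Jordan levels --- it contains the cross-level maps compatible with $\delta$ --- and you only assert that these ``must be produced by the interaction of consecutive steps'' and that you ``expect'' a mass-formula organization to close the identity. That identity is the entire content of the theorem; deferring it leaves the result unproved. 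Two further unverified steps point the same way: the claim that, conditioned on $\overline A$ having corank $r$, the rescaled block $C'$ is again Haar-distributed requires an actual pushforward computation (the splitting congruence depends on $A$), and the $p=2$ case is postponed entirely even though the $2$-adic block structure changes the per-level analysis. If you want to salvage the plan, the key missing idea is the paper's counting device: make $\#\Aut(\Gamma,\delta)$ appear as the number of pairing-preserving trivializations of a fixed cokernel, rather than trying to reconstruct the orthogonal group of $(\Gamma,\delta)$ from the recursion.
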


\medskip

\noindent In other words, the probability that the cokernel of a random symmetric $n \times n$ matrix over $\Z_p$ is isomorphic to $(\Gamma, \delta)$ tends to a limit that is inversely proportional to $\# \Gamma \cdot \# \Aut(\Gamma, \delta)$, with constant of proportionality $\prod_{i = 1}^\infty (1-p^{1-2i})$.

\begin{rem}
Note that a random matrix is nonsingular with probability 1, but the Laplacian of a random graph is always singular because its row sums and column sums are zero.  However, we also prove that the same distribution holds for the torsion part of the cokernel of a random matrix with all row sums and column sums equal to zero, with respect to Haar measure on this subgroup of $\Sym_n(\Z_p)$.  See Theorem~\ref{thm:zerosum}.
\end{rem}

\begin{rem}
Theorem \ref{thm:pparts} gives the probability that a random matrix in $\Sym_n(\Z_p)$ has cokernel with its given duality pairing isomorphic to a particular pair $(\Gamma,\delta)$.  It is not obvious from the form of this result that the sum taken over all possible pairs of a $p$-group and a duality pairing on it is equal to $1$.  In \cite{Fulman14}, Fulman shows that this defines a probability measure by relating it to the theory of Hall-Littlewood polynomials.  He further shows how this measure occurs as one specialization of a two parameter family of probability measures.  A $p$-group naturally defines a partition $\lambda$, so this result gives a probability measure on the set of all partitions.  Fulman uses two more of these specializations to compute the probability that a partition chosen from this distribution has given size and the probability that a randomly chosen partition has a specified number of parts, which is also proven algebraically in \cite{Wood14}.
\end{rem}

\section{Distribution of Cokernels of Haar Random Symmetric Matrices}\label{sec:cokernels}

In this section, we determine the distribution of cokernels of random symmetric matrices over $\Z_p$, chosen according to Haar measure, and prove Theorem~\ref{thm:pparts}.  In particular, we see that these cokernels are distributed according to Heuristic~\ref{H:Main} as the size of the matrix goes to infinity.  This is an analog of a result of Friedman and Washington \cite{FriedmanWashington89}, that cokernels of Haar random square matrices over $\Z_p$ are distributed according to the Cohen-Lenstra heuristics as their size goes to infinity, as well as an analog of a result of Bhargava, Kane, Lenstra, Poonen, and Rains, that cokernels of skew-symmetric random matrices over $\Z_p$ are distributed according to Delaunay's heuristics as their size goes to infinity.  Our strategy and proofs are closely analogous to those in \cite[Section~3]{BKLPR13}, to which we refer the reader for a beautiful treatment of similar results in which the duality pairing is replaced by a nondegenerate alternating form.

\begin{rem}
Theorem~\ref{thm:pparts} is used by the fifth author in \cite{Wood14}, in combination with a universality result
that says cokernels of much more general random symmetric matrices, including Laplacians of random graphs, 
are distributed in the same way as cokernels of Haar random symmetric matrices.
\end{rem}

Let $A$ be nonsingular symmetric $n \times n$ matrix with entries in $\Z_p$.  Then $A$ induces a natural symmetric bilinear pairing
\[
\pair_A: \Z_p^n \times \Z_p^n \rightarrow \Q_p/\Z_p,
\]
given by 
\[
\langle x, y \rangle_A = y^t A^{-1} x.
\]
Let $\Gamma$ be the cokernel of $A$.  If $x$ or $y$ is in the image of $A$, then $\langle x, y \rangle_A = 0$, so there is an induced symmetric pairing $\delta_A : \Gamma \times \Gamma \rightarrow \Q_p / \Z_p$. 
The image of $\delta_A$ is a subgroup of $\frac{1}{|\Gamma|} \Z_p/\Z_p$, which is naturally identified with $\frac{1}{|\Gamma|} \Z/\Z \subset \Q/\Z$.  The resulting map from $\Gamma$ to $\Hom(\Gamma, \Q/\Z)$ is injective, and hence an isomorphism.  In particular, $(\Gamma, \delta_A)$ is a finite abelian $p$-group with duality pairing.

The  goal of this section is to prove Theorem~\ref{thm:pparts}, which gives the distribution on the groups with duality pairing appearing as the cokernel of $A$, when $A$ is chosen randomly with respect to additive Haar measure on $\Sym_n(\Z_p)$.  More concretely, a random element $x \in \Z_p$ chosen with respect to Haar measure is given by independently choosing a random element of $\Z/p\Z$ for each digit of the $p$-adic expansion of $x$.  To choose a random $A \in \Sym_n(\Z_p)$ with respect to this measure, we simply choose a random element for each entry $a_{i,j}$ of $A$ with $i \le j$.

The structure of the argument is similar to the proof of \cite[Theorem~3.9]{BKLPR13}.  One key ingredient in the argument is the following characterization of pairs of matrices that determine the same pairings  $\pair: \Z_p^n \times \Z_p^n \rightarrow \Q_p/\Z_p$.  For a matrix $M$ with entries in $\Z_p$, we write $\overline M$ for the matrix with entries in $\Z/p\Z$ given by reducing the entries of $M$ modulo $p$.

\begin{lemma}[Lemma~3.2 of \cite{BKLPR13}]
\label{BKLemma}
Let $A$ and $M$ be nonsingular  $n \times n$ matrices with entries in $\Z_p$.  Then the pairings $\pair_A$ and $\pair_M$ from $\Z_p^n \times \Z_p^n$ to $\Q_p/\Z_p$ are the same if and only if
\begin{enumerate}
\item the matrix $A$ is equal to $M + MRM$, for some $R \in M_{n\times n}(\Z_p)$, and
\item the rank of $\overline A$ is equal to the rank of $\overline M$.
\end{enumerate}
\end{lemma}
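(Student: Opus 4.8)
The plan is to convert equality of the two pairings into a single integrality statement about $A^{-1}-M^{-1}$, and then analyze that statement from both sides. First I would show that $\pair_A=\pair_M$ as maps $\Z_p^n\times\Z_p^n\to\Q_p/\Z_p$ if and only if $N:=A^{-1}-M^{-1}\in M_{n\times n}(\Z_p)$: since $\langle x,y\rangle_A-\langle x,y\rangle_M=y^tNx$, evaluating on standard basis vectors $x=e_i,\ y=e_j$ recovers each entry of $N$ modulo $\Z_p$, so the two forms agree exactly when every entry of $N$ is integral. I would also record the observation that the radical $\{x:\langle x,y\rangle_A=0\ \forall y\}$ equals $\operatorname{Im}(A)=A\Z_p^n$ (if $A^{-1}x\in\Z_p^n$ then $x\in A\Z_p^n$, and conversely), which drives the forward direction.

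For the forward direction, assume the pairings agree, so $N$ is integral and the two radicals coincide, giving $\operatorname{Im}(A)=\operatorname{Im}(M)$. Then $W:=M^{-1}A$ satisfies $W\Z_p^n=\Z_p^n$, so $W\in\GL_n(\Z_p)$. Condition (2) is immediate: $\coker(A)=\Z_p^n/\operatorname{Im}(A)=\coker(M)$, and $\rank\overline A=n-\dim_{\F_p}\coker(\overline A)=n-r_p(\coker A)$ depends only on this group, so $\rank\overline A=\rank\overline M$. For condition (1), set $R:=M^{-1}(A-M)M^{-1}=(W-I)M^{-1}$; since $N=(W^{-1}-I)M^{-1}$, a short manipulation gives $R=-WN$, a product of integral matrices, so $R\in M_{n\times n}(\Z_p)$, and $MRM=A-M$ by construction.

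For the reverse direction, assume (1) and (2). I would first reduce to the case where $M$ is diagonal: choosing $P,Q\in\GL_n(\Z_p)$ with $PMQ=D=\text{diag}(p^{b_1},\dots,p^{b_n})$ and replacing $A,R$ by $PAQ,\ Q^{-1}RP^{-1}$ preserves relation (1), the equality of reduced ranks, and the integrality of $N$ (which transforms to $Q^{-1}NP^{-1}$). Writing $A=DW$ with $W=I+RD$ integral, one has $N=-W^{-1}R$, so it suffices to prove $W\in\GL_n(\Z_p)$. Reducing mod $p$, $\overline D$ is the projection onto the coordinate set $S=\{i:b_i=0\}$ of size $s=\rank\overline M$; the columns of $\overline W$ outside $S$ are standard basis vectors, so $\det\overline W=\det\big((I+\overline R)_{S,S}\big)$, while $\overline A=\overline D\,\overline W$ gives $\rank\overline A=\rank\big((I+\overline R)_{S,S}\big)$. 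Thus condition (2), namely $\rank\overline A=s$, holds precisely when $(I+\overline R)_{S,S}$ is invertible over $\F_p$, i.e.\ when $\det\overline W\neq0$, i.e.\ when $W\in\GL_n(\Z_p)$. Then $W^{-1}$ is integral, $N=-W^{-1}R$ is integral, and the pairings agree.

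The main obstacle is the reverse implication, specifically identifying the rank hypothesis (2) with invertibility of $W$ over $\Z_p$. A priori $\det W$ being a unit looks like strictly more information than a statement about reduced ranks, and the key point is that the only place $W=I+RD$ can fail to be a unit is in the block indexed by the unit-valuation positions $S$---exactly the block whose nondegeneracy is measured by $\rank\overline A=\rank\overline M$. Isolating this block by reducing $M$ to Smith normal form and running the cofactor computation above is what makes the equivalence transparent; without that reduction, disentangling the contribution of the elementary divisors of $M$ from the perturbation $R$ is the fiddly step.
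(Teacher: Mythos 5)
Your proof is correct, but note that the paper itself does not prove this statement: it is imported verbatim as Lemma~3.2 of \cite{BKLPR13}, so there is no in-paper argument to compare against. Your argument is essentially the standard one, and all the steps check out: the reduction of ``$\pair_A=\pair_M$'' to the integrality of $N=A^{-1}-M^{-1}$ by evaluating on basis vectors is right; in the forward direction the identification of the radical of $\pair_A$ with $A\Z_p^n$ gives $A\Z_p^n=M\Z_p^n$, hence $W=M^{-1}A\in\GL_n(\Z_p)$ and $R=-WN$ integral, and the rank condition follows since $\rank\overline{A}=n-r_p(\coker A)$ is an invariant of the cokernel; in the reverse direction the Smith normal form reduction is legitimate (conjugating $A\mapsto PAQ$, $R\mapsto Q^{-1}RP^{-1}$, $N\mapsto Q^{-1}NP^{-1}$ preserves all three conditions), and your block computation correctly shows that both $\det\overline{W}$ and $\rank\overline{A}$ are controlled by the same $s\times s$ minor $(I+\overline{R})_{S,S}$, so that hypothesis (2) is precisely the invertibility of $W=I+RD$ over $\Z_p$, whence $N=-W^{-1}R$ is integral. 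You have also correctly identified the one genuinely delicate point, namely that (2) is exactly what upgrades $W$ from an integral matrix to an element of $\GL_n(\Z_p)$; this matches the role the rank condition plays in the original proof in \cite{BKLPR13}, which runs the same computation with $I+RM$ in place of your diagonalized $W$. The Smith form step is not strictly necessary but does make the identification of the relevant minor cleaner.
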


We also use the following formula for counting pairings.  For an arbitrary symmetric bilinear pairing $\sqpair: \Z_p^n \times \Z_p^n \rightarrow \Q_p/\Z_p$, we write $\coker \sqpair$  for the finite abelian $p$-group
\[
\coker \sqpair = \Z_p^n / \{x \in \Z_p^n \ | \ [x,y] = 0 \mbox{ for all } y \in \Z_p^n \}.
\]
Note that $\coker \sqpair$ carries a canonical duality pairing, induced by $\sqpair$.

\begin{lemma}\label{lem:numpairings}
The number of symmetric bilinear pairings $\sqpair: \Z_p^n \times \Z_p^n \rightarrow \Q_p / \Z_p$ such that $\coker \sqpair$ with its canonical duality pairing is isomorphic to $(\Gamma, \delta)$ is
\[
\frac{\# \Gamma^n \cdot \prod_{j = n-r+1}^n (1 - p^{-j})} {\# \Aut(\Gamma, \delta)},
\]
where $r$ is the rank of $\Gamma$.
\end{lemma}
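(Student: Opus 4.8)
The plan is to convert the count of pairings into a count of surjections onto $\Gamma$, and then evaluate that count directly. First I would observe that every symmetric bilinear pairing $\sqpair$ with $\coker \sqpair \isom (\Gamma,\delta)$ is a pullback: composing the quotient $\Z_p^n \to \coker\sqpair$ with a chosen pairing-isomorphism $\coker\sqpair \to \Gamma$ gives a surjection $\pi\colon \Z_p^n \to \Gamma$ with $[x,y] = \delta(\pi(x),\pi(y))$ for all $x,y$. Conversely, I claim that \emph{every} surjection $\pi\colon \Z_p^n \to \Gamma$ produces in this way a pairing whose cokernel is $(\Gamma,\delta)$. The key point is that since $\delta$ is a duality pairing, hence nondegenerate, the radical of the pulled-back pairing $\delta(\pi(-),\pi(-))$ is exactly the preimage under $\pi$ of the radical of $\delta$, namely $\ker\pi$; its cokernel with the induced pairing is therefore canonically $(\Gamma,\delta)$. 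Thus the assignment $\pi \mapsto \delta(\pi(-),\pi(-))$ from surjections to pairings has image precisely the set we wish to count.

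Next I would determine the fibers of this assignment. If $\pi_1,\pi_2$ induce the same pairing, then their kernels both equal the radical of that common pairing, so $\ker\pi_1 = \ker\pi_2$ and hence $\pi_2 = \alpha\circ\pi_1$ for a unique $\alpha \in \Aut(\Gamma)$. Equality of the induced pairings, together with surjectivity of $\pi_1$, forces $\delta(\alpha a,\alpha b) = \delta(a,b)$ for all $a,b\in\Gamma$, i.e.\ $\alpha\in\Aut(\Gamma,\delta)$; conversely any such $\alpha$ leaves the induced pairing unchanged. Hence $\Aut(\Gamma,\delta)$ acts freely on $\Sur(\Z_p^n,\Gamma)$ with orbits equal to the fibers, so the number of pairings sought is $\#\Sur(\Z_p^n,\Gamma)/\#\Aut(\Gamma,\delta)$.

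It then remains to count $\#\Sur(\Z_p^n,\Gamma)$. Since $\Z_p^n$ is free of rank $n$ over $\Z_p$, we have $\#\Hom(\Z_p^n,\Gamma) = \#\Gamma^n$, and by Nakayama's lemma a homomorphism is surjective exactly when its reduction $\F_p^n \to \Gamma/p\Gamma \isom \F_p^r$ is surjective. Reduction mod $p$ identifies a uniformly random homomorphism with a uniformly random linear map $\F_p^n \to \F_p^r$, and the fraction of such maps of full rank $r$ is the standard count $\prod_{j=0}^{r-1}(1-p^{j-n}) = \prod_{j=n-r+1}^{n}(1-p^{-j})$. Multiplying gives $\#\Sur(\Z_p^n,\Gamma) = \#\Gamma^n \prod_{j=n-r+1}^{n}(1-p^{-j})$, and dividing by $\#\Aut(\Gamma,\delta)$ yields the claimed formula.

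The step I expect to require the most care is the fiber analysis in the second paragraph: checking that two surjections inducing the same pairing must differ by a \emph{pairing-preserving} automorphism, rather than an arbitrary one. This is precisely where both the nondegeneracy of $\delta$ and the surjectivity of $\pi$ are indispensable, and it is what upgrades a naive count of surjections into the correct count weighted by $\#\Aut(\Gamma,\delta)$.
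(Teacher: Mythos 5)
Your proposal is correct and follows essentially the same route as the paper: both identify the pairings in question with orbits of $\Sur(\Z_p^n,\Gamma)$ under the free action of $\Aut(\Gamma,\delta)$, and both count surjections via Nakayama's lemma, yielding $\#\Gamma^n\prod_{j=n-r+1}^{n}(1-p^{-j})$. The paper compresses the orbit--fiber analysis into one sentence (``a pairing with a choice of pairing-respecting isomorphism $\coker\sqpair\xrightarrow{\sim}\Gamma$ is equivalent to a surjection''), whereas you spell out the nondegeneracy and surjectivity arguments that justify it; the content is the same.
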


\begin{proof}
A symmetric bilinear pairing $\sqpair: \Z_p^n \times \Z_p^n \rightarrow \Q_p/\Z_p$ with a choice of an isomorphism $\coker \sqpair \xrightarrow{\sim} \Gamma$ respecting the pairing is equivalent to a surjection $\Z_p^n \rightarrow \Gamma$.  In each instance, there are $\# \Aut(\Gamma,\delta)$ choices of isomorphisms from $\coker \sqpair$ to $\Gamma$ that respect the pairing, so the number of distinct pairings with cokernel isomorphic to $(\Gamma, \delta)$ is 
\[
\#\Sur(\Z_p^n,\Gamma)/\# \Aut(\Gamma,\delta).
\]
Note that a homomorphism $\Z_p^n\ra \Gamma$ is surjective if and only if it is a surjection modulo $p$, by Nakayama's Lemma.  Therefore,
\[
\#\Sur(\Z_p^n,\Gamma)=\frac{\#\Gamma^n \cdot \prod_{i=0}^{r-1}(p^n-p^i)}{p^{nr}},
\]
and the lemma follows.
\end{proof}

The final key ingredient in our argument is the classification of symmetric bilinear forms over $\Q_p$ up to $\GL_n(\Z_p)$ equivalence.  When $p$ is odd, any symmetric bilinear form over $\Q_p$ is diagonalizable by a $p$-adic integral change of coordinates.  More precisely, if $M$ is a symmetric matrix with entries in $\Q_p$, then we can choose $H \in \GL_n(\Z_p)$ such that $HMH^t$ is diagonal \cite[Section~15.4.4]{ConwaySloane99}. The classification of symmetric bilinear forms over $\Q_2$ is more complicated, so we treat this case separately at the end of the section.

\begin{proof}[Proof of Theorem~\ref{thm:pparts} for odd $p$.]
Let $A$ be a symmetric $n \times n$ matrix with entries in $\Z_p$, chosen randomly with respect to additive Haar measure.   The singular matrices have Haar measure zero, so we may assume $A$ is nonsingular.   We want to determine the probability that the cokernel of $A$ with duality pairing $\delta_A$ is isomorphic to $(\Gamma, \delta)$.   Note that $\langle x, y \rangle_A$ is zero for all $y \in \Z_p^n$ if and only if $x \in A \Z_p^n$, so the cokernel of $A$ with its duality pairing is canonically isomorphic to $\coker \pair_A$ with its induced pairing.

We now compute the probability that $\pair_A$ is equal to a fixed symmetric bilinear pairing $\sqpair: \Z_p^n \times \Z_p^n \rightarrow \Q_p / \Z_p$ such that $\coker \sqpair$ with its induced pairing is isomorphic to $(\Gamma, \delta)$.  Since $A$ is chosen randomly with respect to Haar measure, this probability is invariant under a change of basis on $\Z_p^n$.  

Assume $p$ is odd, and let $N \in \Sym_n(\Q_p)$ be a symmetric matrix whose $(i,j)$ entry is a lift of $[e_i,e_j]$ to $\Q_p$.  By the classification of symmetric bilinear forms over $\Q_p$ up to $\GL_n(\Z_p)$ \cite[Section~15.4.4]{ConwaySloane99}, after a change of basis on $\Z_p^n$ we may assume $N$ is diagonal. 
 Possibly changing the lift to $\Q_p$, we may further ensure that $N$ has diagonal entries with valuations $-d_i$, where
\[
d_1=\dots =d_{n-r}=0 \mbox{ \ and \ } 1\leq d_{n-r+1}\leq\dots\leq d_n.
\]
We therefore assume that $N$ is in this form.  Let $M = N^{-1}$ and note that, by construction, we have $\sqpair = \pair_M$.
 
We have shown that the probability that $\pair_A = \sqpair$ is the same as the probability that $\pair_A = \pair_M$, where $M$ is a diagonal matrix whose $i$th diagonal entry has valuation $d_i$.  By Lemma~\ref{BKLemma}, the pairings $\pair_A$ and $\pair_M$ are equal if and only if $A = M + MRM$ for some $R \in M_{n \times n}(\Z_p)$ and $\rank (\overline A) = \rank (\overline M)$.  We now determine the probability that $\pair_A = \pair_M$.

The condition that $A$ be of the form $M + MRM$ is equivalent to requiring that $M^{-1}(A-M)M^{-1}$ is in $M_{n\times n}(\Z_p).$  Given $M$, this is equivalent to the entries $a_{i,j}$ of $A$ satisfying certain divisibility conditions. 
Let $p^{d_i}u_i$  be the $i$th diagonal entry of $M$, with $u_i\in\Z_p^*$.
For $i<j$, the $(i,j)$ entry of $M^{-1}(A-M)M^{-1}$ is in $\Z_p$ if and only if $p^{d_i+d_j}\ \mid a_{i,j}$, and the $(i,i)$ entry is in $\Z_p$ if and only if $p^{2d_i} \mid (a_{i,i} - p^{d_i}u_i)$.  Therefore, for all $i \leq j$, the condition $A = M + MRM$ is equivalent to fixing the first $d_i+ d_j$ digits in the $p$-adic expansion of $a_{i,j}$.  Note, in particular, that when $d_i = d_j = 0$, there is no condition at all on the entry $a_{i,j}$.
  The probability that a random symmetric matrix satisfies these conditions is then
\[
\prod_{1\le i \le j \le n} p^{-(d_i + d_j)} = \prod_{i=1}^n p^{-(n+1) d_i} = \frac{1}{\# \Gamma^{n+1}}.
\]

Furthermore, given the above valuation conditions, we see that $\overline A$ is zero outside the upper left  $(n-r) \times (n-r)$ minor. The matrix $\overline M$ is also zero outside the upper left  $(n-r) \times (n-r)$ minor, and this minor is a diagonal matrix with non-zero entries.  In particular, $\rank (\overline M)=n-r$.
The condition that $\rank( \overline A) = \rank (\overline M)$ is independent from the divisibility conditions, and holds with probability equal to the proportion of invertible matrices in $\Sym_{n-r}(\F_p)$.  In particular, the probability that $\pair_A = \pair_M$, and hence the probability that $\pair_A = \sqpair$, is 
\[
\frac{\# (\GL_{n-r}(\F_p) \cap \Sym_{n-r}(\F_p))}{\# \Sym_{n-r}(\F_p) \cdot \# \Gamma^{n+1}}.
\]
Note that this probability depends only on $\# \Gamma$, and is independent of all other choices.

Lemma~\ref{lem:numpairings} gives the number of symmetric bilinear pairings $\sqpair: \Z_p^n \times \Z_p^n \rightarrow \Q_p / \Z_p$ such that $\coker \sqpair$ with its induced duality pairing is isomorphic to $(\Gamma, \delta)$.  We conclude that the probability that the cokernel of $A$ with its duality pairing is isomorphic to $(\Gamma, \delta)$ is the product
\begin{equation} \label{eqn:probability}
\frac{\# (\GL_{n-r}(\F_p) \cap \Sym_{n-r}(\F_p))}{\# \Sym_{n-r}(\F_p) \cdot \# \Gamma^{n+1}} \cdot \frac{\# \Gamma^n \cdot \prod_{j = n-r+1}^n (1 - p^{-j})} {\# \Aut(\Gamma, \delta)} .
\end{equation}
By \cite[Theorem~2]{MacWilliams69}, the number of invertible matrices in $\Sym_{k}(\F_p)$ is
\[
p^{\binom{k+1}{2}} \prod_{j=1}^{\lceil \frac{k}{2} \rceil} (1-p^{1-2j}).
\]
Therefore, the expression in (\ref{eqn:probability}) can be rewritten as
\[
\frac{\prod_{i=1}^{\lceil(n-r)/2\rceil} (1-p^{1-2i}) \cdot \prod_{j=n-r+1}^n (1-p^{-j}) }{\# \Gamma \cdot \# \Aut(\Gamma,\delta)},
\]
and the theorem follows.
\end{proof}

\bigskip

We now return to the case $p=2$ and show that, for a given symmetric pairing $\sqpair:\Z_p^n\times\Z_p^n \ra \Q_p/\Z_p$, the probability that $\pair_A=\sqpair$  is again
\[
\frac{\# (\GL_{n-r}(\F_p) \cap \Sym_{n-r}(\F_p))}{\# \Sym_{n-r}(\F_p) \cdot \# \Gamma^{n+1}}.
\]
 
\noindent The classification of symmetric bilinear forms over $\Q_2$ is due to Wall \cite{Wall64}. It is given in the following form in \cite[Theorem 2, Section~15.4.4]{ConwaySloane99}.
\begin{thm}[\!\!\cite{ConwaySloane99}]\label{CS}
Suppose that $A \in \Sym_n(\Z_2)$.  Then there exists a matrix $H \in \GL_n(\Z_2)$ such that $H A H^t$ is a block diagonal matrix consisting of: 
\begin{enumerate}
\item diagonal blocks $u_i 2^{d_i}$, where each $d_i \ge 0$ and $u_i$ is a unit in $\Z_2$, 
\item $2\times 2$ blocks of the form $2^{e_i} \left(\begin{smallmatrix} a & b \\ b & c \end{smallmatrix} \right)$, where $e_i \ge 0,\ b$ is a unit in $\Z_2$ and $a,c \in 2\Z_2$.
\end{enumerate}
\end{thm}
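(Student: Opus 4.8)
The plan is to prove this classification by induction on $n$, peeling off one block at a time using congruence operations $A \mapsto HAH^t$ with $H \in \GL_n(\Z_2)$ (equivalently, simultaneous row operations and the matching column operations). The cases $n = 0, 1$ are immediate. For the inductive step, I would first choose an entry $a_{i,j}$ of minimal $2$-adic valuation $d = v(a_{i,j})$, and split into two cases according to whether this minimum is attained on the diagonal.

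In the first case, some diagonal entry attains the minimum, so after permuting the basis vectors we may assume $v(a_{1,1}) = d$, and write $a_{1,1} = u\, 2^d$ with $u \in \Z_2^\times$. Because $d$ is minimal, every first-row entry satisfies $v(a_{1,k}) \ge d$, so each ratio $a_{1,k}/a_{1,1}$ lies in $\Z_2$. I would then clear the first row and column by the operations $(\mathrm{row}_k) \mapsto (\mathrm{row}_k) - (a_{1,k}/a_{1,1})(\mathrm{row}_1)$ for $k \ge 2$, together with the matching column operations; these are realized by an $H \in \GL_n(\Z_2)$, and they leave the $(1,1)$ entry untouched. This splits off the $1 \times 1$ block $u\, 2^d$ and reduces to a symmetric $(n-1) \times (n-1)$ matrix, to which the inductive hypothesis applies.

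In the second case, the minimum is attained only off the diagonal, say at $a_{1,2}$, so $v(a_{1,1})$ and $v(a_{2,2})$ are both at least $d+1$. Factoring $2^d$ out of the principal $2 \times 2$ submatrix on $\{1,2\}$ gives $2^d \left(\begin{smallmatrix} a & b \\ b & c \end{smallmatrix}\right)$ with $b = a_{1,2}/2^d \in \Z_2^\times$ and $a = a_{1,1}/2^d,\ c = a_{2,2}/2^d \in 2\Z_2$. The crucial point is that this block is \emph{unimodular}: its determinant $ac - b^2$ is a unit, since $b^2 \in \Z_2^\times$ while $ac \in 4\Z_2$. Hence the block lies in $\GL_2(\Z_2)$, and because every other entry of rows and columns $1,2$ has valuation at least $d$, multiplying each pair $(a_{k,1}, a_{k,2})$ on the right by the inverse of the block again yields a vector with entries in $\Z_2$. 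Thus the analogous clearing operations, now pivoting on the $2 \times 2$ unimodular block, lie in $\GL_n(\Z_2)$; they split off a block of the stated form and reduce to a symmetric $(n-2) \times (n-2)$ matrix.

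I expect the main obstacle to be the bookkeeping of the second case: one must verify that pivoting on a $2 \times 2$ block, rather than a scalar, to eliminate the surrounding entries genuinely stays inside $M_{n \times n}(\Z_2)$, and this is exactly where both the minimality of $d$ and the unimodularity $ac - b^2 \in \Z_2^\times$ are needed. This also explains why full diagonalization fails when $p = 2$: over an odd prime one could complete the square and divide by $2$ to absorb the off-diagonal entry into the diagonal, but that step is unavailable here, and the unimodular even $2 \times 2$ block is precisely the residual obstruction.
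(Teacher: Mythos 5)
The paper does not prove this statement; it is quoted verbatim from Conway--Sloane (and is ultimately due to Wall), so there is no internal proof to compare against. Your induction is the standard argument for the $2$-adic Jordan splitting, and it is correct: minimality of $d$ makes the ratios $a_{1,k}/a_{1,1}$ integral in the diagonal-pivot case, and in the off-diagonal case the determinant $ac-b^2 \equiv -b^2 \pmod{4\Z_2}$ is a unit, so the $2\times 2$ pivot $2^d\left(\begin{smallmatrix} a & b \\ b & c\end{smallmatrix}\right)$ has inverse $2^{-d}B^{-1}$ with $B^{-1}\in \GL_2(\Z_2)$, and applying it to the pairs $(a_{k,1},a_{k,2})$, each of valuation at least $d$, produces integral multipliers; the clearing matrix is therefore unipotent over $\Z_2$ and the Schur complement is again symmetric, so the induction closes. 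Your closing remark correctly identifies why this differs from odd $p$: completing the square requires dividing by $2$. The one caveat is degenerate input: if $A$ is singular the induction can terminate at a zero Schur complement (e.g.\ $A=\operatorname{diag}(1,0)$), which has no entry of finite valuation and cannot be written with blocks $u_i2^{d_i}$, $u_i\in\Z_2^*$. This defect is inherited from the statement itself, which implicitly assumes $A$ nonsingular (or should permit zero blocks); the paper only ever applies the theorem to nonsingular matrices, where your argument goes through without modification. You might add one sentence making the nonsingularity hypothesis explicit, noting that it is preserved under passage to the Schur complement since $\det(HAH^t)$ is a unit multiple of the product of the block determinants.
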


\begin{proof}[Proof of Theorem~\ref{thm:pparts} for $p = 2$]
As in the case for odd $p$, we let $A$ be a symmetric $n\times n$ matrix with entries in $\Z_2$ chosen randomly with respect to additive Haar measure.  We will again consider the probability that the cokernel of $A$ with duality pairing $\delta_A$ is isomorphic to $(\Gamma,\delta)$.  
We compute the probability that $\pair_A$ is equal to a fixed symmetric bilinear pairing $\sqpair: \Z_2^n \times \Z_2^n \rightarrow \Q_2 / \Z_2$ such that $\coker \sqpair$ with its induced pairing is isomorphic to $(\Gamma, \delta)$.  Just as for odd $p$, since $A$ is chosen randomly with respect to Haar measure, this probability is invariant under a change of basis on $\Z_2^n$.  

Let $N \in \Sym_n(\Q_2)$ be a symmetric matrix whose $(i,j)$ entry is a lift of $[e_i,e_j]$ to $\Q_2$.  We first note that there exists an element $2^m \in \Z_2$ such that $2^m N \in \Sym_n(\Z_2)$. It is no longer true that we can change basis so that $2^m N$ is diagonal, but Theorem \ref{CS} shows that there exists $H\in \GL_2(\Z_2)$ such that $HNH^t$ is a block diagonal matrix with entries in $\Q_2$ that has two types of blocks, diagonal blocks given by a unit times $2^{-d_i}$, and $2\times 2$ blocks of the form $2^{-e_i} \left(\begin{smallmatrix} a_i & b_i \\ b_i & c_i \end{smallmatrix} \right)$, where $b_i \in \Z_2^*$ and $a_i,c_i \in 2\Z_2$.  By possibly changing the lift to $\Q_2$, we can suppose that each $d_i, e_i \ge 0$.  Since $a_i c_i  -b_i^2 \in \Z_2^*$, the inverse of a $2\times 2$ block of this form is a unit times $2^{e_i} \left(\begin{smallmatrix} c_i & -b_i \\ -b_i & a_i \end{smallmatrix}\right)$.  We note that $-1\in \Z_2^*$ and can therefore write this block as a unit times $2^{e_i} \left(\begin{smallmatrix} c_i & b_i \\ b_i & a_i \end{smallmatrix}\right)$ where $b_i \in \Z_2^*$ and $a,c \in 2\Z_2$.  We say that $e_i$ is the valuation of this block.  Therefore, $M = (HNH^t)^{-1}$ has entries in $\Z_2$ and by construction we have $\sqpair = \pair_M$.

As above, Lemma~\ref{BKLemma} implies that $\pair_A$ and $\pair_M$ are equal if and only if $A = M + MRM$ for some $R \in M_{n \times n}(\Z_2)$ and $\rank (\overline A) = \rank (\overline M)$.  The condition that $A$ be of the form $M + MRM$ is equivalent to $M^{-1} (A-M) M^{-1} \in M_{n\times n}(\Z_2)$, and gives divisibility conditions on the entries $a_{i,j}$ of $A$.  We determine these conditions by explicitly computing the relevant entries of $M+MRM$. For odd $p$ we could suppose that $M$ was a diagonal matrix such that the diagonal entry in row $i$ had valuation $d_i$.  There were two kinds of divisibility results, one for $a_{i,j}$ with $i<j$, and another for $a_{i,i}$. Combining these constraints, we saw that there exists a matrix $R \in M_{n \times n}(\Z_p)$ such that $A = M + MRM$ if and only if for each $i\le j$ the first $d_i+d_j$ entries in the $p$-adic expansion of $a_{i,j}$ are given by particular values determined by $M$.  For $p=2$ there are more kinds of constraints to consider. However, we will show that the same statement is true, that there exists an $R \in M_{n \times n}(\Z_2)$ such that $A = M + MRM$ if and only if for each $i\le j$ the first $d$ entries in the $p$-adic expansion of $a_{i,j}$ are given by a specific set of values determined by $M$, where $d$ is the sum of the valuations of the blocks of $M$ in rows $i$ and $j$.

After permuting coordinates we may suppose that the initial rows of $M$ have only diagonal nonzero entries $u_i 2^{d_i}$ where each $u_i \in \Z_2^*$ and $0 \le d_0 \le d_1 \le \cdots \le d_{k_1}$, followed by a set of $2\times 2$ diagonal blocks $2^{e_i} \left(\begin{smallmatrix} a_i & b_i \\ b_i & c_i \end{smallmatrix} \right)$, where $b_i \in \Z_2^*,\ a_i,c_i \in 2\Z_2$, and $0\le e_1\le \cdots \le e_{k_2}$.  By invariance of Haar measure, we see that the probability that $\pair_A = \sqpair$ is the same as the probability that $\pair_A = \pair_M$, where $M$ is a block diagonal matrix of this form.

Let $R \in M_{n\times n}(\Z_2)$ have entries $r_{i,j}$.  We explicitly compute the entries of the matrix $MRM$.  Row $i$ of $M$ can correspond either to a diagonal $1\times 1$ block, the first row of a $2\times 2$ block of the type described in the previous paragraph, or the second row of such a block.

Suppose that rows $i$ and $j$ of $M$ each correspond to $1\times 1$ blocks with valuations $d_i$ and $d_j$, respectively.  The $(i,j)$ entry of $MRM$ is then a unit times $2^{d_i + d_j} r_{i,j}$. An appropriate choice of $r_{i,j} \in \Z_2$ shows that this entry can be any element of $2^{d_i+d_j} \Z_2$.  Now suppose that row $i$ corresponds to a $1\times 1$ block with valuation $d_i$ and that rows $j$ and $j+1$ correspond to the $2\times 2$ block $2^{e_j} \cdot \left(\begin{smallmatrix} a & b\\ b & c \end{smallmatrix} \right)$.   Then the $(i,j)$ entry of $MRM$ is a unit times
\[
2^{d_i+e_j} \left( r_{i,j} a + r_{i,j+1} b \right) 
\]
and the $(i,j+1)$ entry is a unit times
\[
2^{d_i+e_j} \left( r_{i,j} b + r_{i,j+1} c \right).
\]
Since $b \in \Z_2^*$, an appropriate choice of $r_{i,j+1}$ shows that this first entry can be any element of $2^{d_i + e_j} \Z_2$, and similarly, an appropriate choice of $r_{i,j}$ shows that the second entry can be any element of $2^{d_i + e_j} \Z_2$.

Now suppose that rows $i$ and $i+1$ correspond to the $2\times 2$ block $2^{e_i} \cdot \left(\begin{smallmatrix} a_i & b_i \\ b_i & c_i \end{smallmatrix} \right)$, and rows $j$ and $j+1$ correspond to the $2\times 2$ block $2^{e_j} \cdot \left(\begin{smallmatrix} a_j & b_j \\ b_j & c_j \end{smallmatrix} \right)$.  Then the four entries, $(i,j), (i+1,j),(i,j+1),(i+1,j+1)$, of $MRM$ are given by
\begin{eqnarray*}
2^{e_i+e_j} & \left( a_i a_j r_{i,j} + b_i a_j r_{i+1,j} + a_i b_j r_{i,j+1} + b_i b_j r_{i+1,j+1}\right) & \\
2^{e_i+e_j} & \left( a_i b_j r_{i,j} + b_i b_j r_{i+1,j} + a_i c_j r_{i,j+1} + b_i c_j r_{i+1,j+1}\right) & \\
2^{e_i+e_j} & \left( b_i a_j r_{i,j} + c_i b_j r_{i+1,j} + b_i b_j r_{i,j+1} + c_i b_j r_{i+1,j+1}\right) & \\
2^{e_i+e_j} & \left( b_i b_j r_{i,j} + c_i b_j r_{i+1,j} + b_i c_j r_{i,j+1} + c_i c_j r_{i+1,j+1}\right), &
\end{eqnarray*}
respectively.  Since $b_i b_j \in \Z_2^*$, appropriate choices of $r_{i+1,j+1}, r_{i+1,j}, r_{i,j+1}$ and $r_{i,j}$, respectively, show that these four entries can each be any elements of $2^{e_i+e_j} \Z_2$.  Note that we are allowing the case where $i=j$ and these two blocks are identical.

We conclude from this analysis that by an appropriate choice of $R,\ MRM$ can be any matrix where the valuation of the $(i,j)$ entry is at least the sum of the valuations of the blocks corresponding to rows $i$ and $j$.  We see that this fixes the initial $p$-adic digits of every entry of the matrix $A-M$.  The probability that a matrix $A$ satisfies all of these conditions is given by the product of the probabilities for individual entries.  Suppose that row $k$ of $A$ corresponds to a block with valuation $d$.  Considering the divisibility constraints from all entries $a_{k,j}$ in this row with $k\le j$ and all $a_{l,k}$ with $l<k$ contributes a factor of $2^{-(n+1)d}$ to this probability.  Taking the product over all rows gives the total probability that there exists an $R \in M_{n \times n}(\Z_2)$ such that $A = M + MRM$.

The determinant of $M$ is the product of the determinants of the diagonal blocks.  For $1\times 1$ diagonal blocks the determinant is a unit times $2^{d_i}$.  A $2\times 2$ block has determinant equal to a unit times $2^{2e_i}$.  As in the case for odd $p,\ \# \Gamma = 2^v$, where $v$ is the $2$-adic valuation of $\det(M)$.

By permuting rows and columns of $M$, we may suppose that $\overline{M}$ is zero outside of the upper left $(n-r)\times (n-r)$ minor, and is a block diagonal matrix of rank $n-r$. The divisibility conditions also imply that $\overline{A}$ is zero outside of its upper left $(n-r)\times (n-r)$ minor.  The condition that $\rank(\overline{A}) = \rank(\overline{M})$ is again independent of the divisibility conditions. Taking the product over the rows of $A$ shows that the probability that $\pair_A=\sqpair$ is
\[
\frac{\# (\GL_{n-r}(\F_2) \cap \Sym_{n-r}(\F_2))}{\# \Sym_{n-r}(\F_2)) \cdot \# \Gamma^{n+1}}.
\]
The rest of the argument is the same as for odd $p$.
\end{proof}

\subsection{A generalization of Theorem~\ref{thm:pparts} for matrices with row and column sums equal to zero.}

The combinatorial Laplacian of a graph is a symmetric matrix with each row and column sum equal to zero.  We now generalize Theorem~\ref{thm:pparts} to random matrices satisfying this condition.  Let $\Sym_n^0 \subset \Sym_n$ be the symmetric matrices with each row and column sum equal to $0$.  We adapt the construction of a finite abelian group with duality pairing associated to a symmetric nonsingular matrix to the case where the matrix may be singular, as follows.

Let $A \in \Sym_n(\Z_p)$ be a possibly singular matrix, and define
\[
L_A = \Z_p^n \cap A\Q_p^n.
\]
Then $A$ determines a natural symmetric pairing $L_A \times L_A \rightarrow \Q_p/\Z_p$,  where if $x=\frac{1}{m} Az$ for $z\in \Z_p^n$ and $m\in \Z_p$, then
$(x,y)\mapsto \frac{1}{m} z^ty$.
We define the finite cokernel of $A$ to be $\Gamma =L_A/ A\Z_p^n$.  This is exactly the torsion subgroup of the cokernel of $A$.  The above pairing descends to 
\[
\Gamma \times \Gamma \rightarrow \Q_p/\Z_p,
\]
which, as in the nonsingular case discussed at the beginning of the section, is a duality pairing on $\Gamma$.

\begin{thm} \label{thm:zerosum}
Let $\Gamma$ be a finite abelian $p$-group and $\delta$ a duality pairing on $\Gamma$.  Choose an $A\in \Sym_n^0(\Z_p)$ randomly with respect to additive Haar measure.  Let $\mu_n(\Gamma,\delta)$ be the probability that the finite cokernel of $A$ with its duality pairing is isomorphic to $(\Gamma,\delta)$.  Let $r$ be the $p$-rank $\dim_{\Z/p\Z} \Gamma/p\Gamma$.  Then 
\[
\mu_n(\Gamma,\delta) = \frac{\prod_{j=n-r}^{n-1} (1-p^{-j}) \prod_{i=1}^{\lceil(n-1-r)/2\rceil} (1-p^{1-2i})}{\# \Gamma \cdot \# \Aut(\Gamma,\delta)}.
\]
In particular,
\[
\lim_{n\rightarrow \infty} \mu_n(\Gamma,\delta)  = \frac{\prod_{i=1}^\infty (1-p^{1-2i})}{\# \Gamma \cdot \# \Aut(\Gamma,\delta)}.
\]
\end{thm}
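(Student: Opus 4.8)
The plan is to reduce Theorem~\ref{thm:zerosum} directly to Theorem~\ref{thm:pparts}, exploiting the numerical coincidence that the claimed formula is exactly the expression $\mu_{n-1}(\Gamma,\delta)$ obtained from Theorem~\ref{thm:pparts} by replacing $n$ with $n-1$. For symmetric $A$, the conditions that every row sum and every column sum vanish are both equivalent to $A\mathbf{1} = 0$, where $\mathbf{1} = (1,\dots,1)^t$. Since $\mathbf{1}$ is primitive, $V := \Z_p^n/\langle \mathbf{1}\rangle$ is free of rank $n-1$, and because $A$ is symmetric with $A\mathbf{1}=0$, the form $u,v \mapsto u^t A v$ has $\langle \mathbf{1}\rangle$ in its radical and hence descends to a symmetric form $\bar A$ on $V$. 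In the basis of $V$ given by the images $\bar e_1,\dots,\bar e_{n-1}$ of the first $n-1$ standard vectors, the Gram matrix of $\bar A$ is precisely the upper-left $(n-1)\times(n-1)$ block $B$ of $A$.

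First I would set up the measure-preserving identification. The map $\Phi\colon \Sym_n^0(\Z_p) \to \Sym_{n-1}(\Z_p)$ sending $A$ to its upper-left block $B$ is a $\Z_p$-linear bijection: its inverse reconstructs the last row and column, and the $(n,n)$ entry, from the zero row- and column-sum conditions, and one checks the reconstructed entries are consistent and symmetric. As a continuous isomorphism of compact abelian groups, $\Phi$ carries Haar measure on $\Sym_n^0(\Z_p)$ to Haar measure on $\Sym_{n-1}(\Z_p)$ by uniqueness of Haar measure.

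Next I would show that the finite cokernel of $A$, with its duality pairing, is isomorphic to $\coker(B)$ with the pairing of Theorem~\ref{thm:pparts}. Discarding the measure-zero locus where $\det B = 0$, the form $\bar A$ is nonsingular, so $A$ has $\Q_p$-rank exactly $n-1$ and $A\Q_p^n$ is the hyperplane of coordinate-sum-zero vectors; hence $L_A = \Z_p^n \cap A\Q_p^n = W$, where $W = \{v \in \Z_p^n : \sum_i v_i = 0\}$. Identifying $W$ with $V^*$ (a functional on $V$ kills $\mathbf{1}$, so is given by a vector in $W$), the map $\bar A\colon V \to V^*$ is exactly $\bar v \mapsto Av$, with image $A\Z_p^n$, so $\Gamma = L_A/A\Z_p^n = W/A\Z_p^n = \coker(\bar A)$. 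Taking the dual basis $f_i = e_i - e_n$ of $W \isom V^*$, the matrix of $\bar A\colon V \to V^*$ is $B$, giving $\Gamma \isom \coker(B)$. A short direct computation, using $x = \tfrac1m A z$ together with the natural pairing between $V$ and $V^*$, shows the zero-sum pairing on $\Gamma$ transports to $\vec y^{\,t} B^{-1}\vec x$, which is exactly the pairing attached to $B$ in Theorem~\ref{thm:pparts}.

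Combining these steps, the probability $\mu_n(\Gamma,\delta)$ for the zero-sum ensemble equals the probability from Theorem~\ref{thm:pparts} for a Haar-random matrix in $\Sym_{n-1}(\Z_p)$, namely the formula of Theorem~\ref{thm:pparts} with $n$ replaced by $n-1$; this is precisely the stated expression, and letting $n \to \infty$ yields the same limit as before. I expect the main obstacle to be the last verification of the previous paragraph: checking that the group isomorphism $\Gamma \isom \coker(B)$ is compatible with the two a priori differently defined duality pairings. I would also observe that none of the lattice identifications ($\mathbf{1}$ primitive, $V$ and $W$ free, $W \isom V^*$) require $p \nmid n$, so the case $p \mid n$ needs no separate treatment.
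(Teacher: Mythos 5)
Your proposal is correct and takes essentially the same route as the paper's proof: both identify $\Sym_n^0(\Z_p)$ with $\Sym_{n-1}(\Z_p)$ via the upper-left block (a Haar-measure-preserving bijection), observe that $L_A$ equals the zero-coordinate-sum sublattice with probability $1$, and verify via the basis $e_i - e_n$ that the induced pairing matches the one attached to the block matrix in Theorem~\ref{thm:pparts}. Your write-up merely makes explicit the dual-lattice bookkeeping and the pairing computation that the paper leaves as a check.
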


\begin{proof}
Given a matrix $A\in \Sym_n^0(\Z_p)$, deleting the last row and column gives a matrix $A'$ in $\Sym_{n-1}(\Z_p)$.  Clearly $A'$ determines $A$ as well, and the correspondence respects Haar measure.

Let $Z_0$ be the subset of $\Z_p^n$ where the coordinates sum to $0$.  We have that $L_A\subseteq Z_0$ and if equality does not hold then some subset of the $n-1$ columns of $A'$ are linearly dependent over $\Q_p$.   In particular $A'$ has determinant $0$, which happens with probability $0$.  Therefore, $L_A = Z_0$ with probability $1$, which we assume from now on.

Let $e_i$ be the standard basis for $\Z_p^n$.
We can explicitly check that the matrix $A'$ gives the same pairing on $\Z_p^{n-1}$ that $A$ gives on $Z_0$ by using the basis $e_i - e_n$ for $Z_0$.  This theorem now follows from Theorem \ref{thm:pparts}.
\end{proof}

\section{Computing Averages}\label{S:averages}
In this section, we make theoretical computations of the exact values that are predicted by Conjecture~\ref{C:basic}.
These are (unconditional) results of group theory about the values taken by the right-hand side of Conjecture~\ref{C:basic} for several important functions $F$. 
Recall that $\A(m)$ is the set of all isomorphism classes of pairs $(\Gamma,\delta)$ where $\Gamma$ is an abelian group of order $m$ and $\delta$ is a duality pairing on $\Gamma$.

\subsection{The normalizing constant}
Here we compute a constant that will be important in our later computations, as it arises when computing the denominator of our averages.

\begin{prop}\label{P:normalizing}
For any prime $p$, we have
\[
\sum_{m = 0}^{\infty} \sum_{(\Gamma,\delta)\in \A(p^m)} \frac{1}{\# \Gamma\cdot \# \Aut(\Gamma,\delta)} = \prod_{i=1}^{\infty} (1-p^{1-2i})^{-1}.
\]
\end{prop}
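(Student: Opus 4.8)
The plan is to deduce the identity from Theorem~\ref{thm:pparts}, which is established independently of any claim that its probabilities sum to one. For a random nonsingular $A \in \Sym_n(\Z_p)$ the pair $(\coker A, \delta_A)$ is a well-defined finite abelian $p$-group with duality pairing, and since nonsingular matrices have full Haar measure, the numbers $\mu_n(\Gamma,\delta)$ define an honest probability distribution on $\A_p = \bigcup_m \A(p^m)$; that is, $\sum_{(\Gamma,\delta)} \mu_n(\Gamma,\delta) = 1$ for every $n$. I would therefore read off the normalizing constant by letting $n \to \infty$ in this tautological identity.

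Write $g(\Gamma,\delta) = \tfrac{1}{\#\Gamma \cdot \#\Aut(\Gamma,\delta)}$, so the target is $\sum_{(\Gamma,\delta)} g = \prod_{i=1}^\infty (1-p^{1-2i})^{-1}$. The key structural observation is that, for a pair of rank $r$, Theorem~\ref{thm:pparts} writes $\mu_n(\Gamma,\delta)$ as $g(\Gamma,\delta)$ times the rank-dependent prefactor $\prod_{j=n-r+1}^n (1-p^{-j}) \prod_{i=1}^{\lceil (n-r)/2\rceil}(1-p^{1-2i})$. Every factor here lies in $(0,1]$, so $\mu_n(\Gamma,\delta) \le g(\Gamma,\delta)$ for all $n$; and for each fixed pair the prefactor tends to $\prod_{i=1}^\infty(1-p^{1-2i})$ as $n\to\infty$, a value independent of $r$. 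Thus $\mu_n(\Gamma,\delta) \to \prod_{i=1}^\infty(1-p^{1-2i})\, g(\Gamma,\delta)$ pointwise, with the limiting constant the same for pairs of every rank.

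The remaining work is to interchange the limit with the sum. First I would apply Fatou's lemma to the nonnegative terms, obtaining $\sum_{(\Gamma,\delta)} \lim_n \mu_n \le \liminf_n \sum_{(\Gamma,\delta)}\mu_n = 1$, which gives $\prod_{i=1}^\infty(1-p^{1-2i}) \sum g \le 1$ and in particular shows that $\sum g$ is finite. With $g$ now known to be summable and to dominate every $\mu_n$, dominated convergence for series (i.e.\ for counting measure) applies and yields $1 = \lim_n \sum \mu_n = \sum \lim_n \mu_n = \prod_{i=1}^\infty(1-p^{1-2i}) \sum g$, whence $\sum g = \prod_{i=1}^\infty(1-p^{1-2i})^{-1}$ as claimed.

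The only real obstacle is the measure-theoretic bookkeeping in the interchange, and it goes through cleanly because of the uniform bound $\mu_n \le g$, which lets Fatou bootstrap the summability of $g$ that in turn powers the dominated-convergence step; there is no circularity, since the finite-$n$ identity $\sum \mu_n = 1$ comes from the mere existence of the cokernel rather than from Proposition~\ref{P:normalizing} itself. As an alternative that avoids analysis, one could compute $\sum g$ directly: an orbit–stabilizer count turns the inner sum over pairings on a fixed $\Gamma$ into $P(\Gamma)/\#\Aut(\Gamma)$, where $P(\Gamma)$ is the number of duality pairings on $\Gamma$, and one would then evaluate the resulting generating function over all finite abelian $p$-groups. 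I expect that route to be considerably more calculation-heavy, with the enumeration of $P(\Gamma)$ the technical crux, so I would favor the Fatou-plus-dominated-convergence argument above.
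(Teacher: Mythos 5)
Your proof is correct and takes essentially the same route as the paper's: both derive the identity from Theorem~\ref{thm:pparts} by passing to the limit in the tautology $\sum_{(\Gamma,\delta)}\mu_n(\Gamma,\delta)=1$ and justifying the interchange by dominated convergence. The only difference is the dominating function — you observe directly that $\mu_n(\Gamma,\delta)\le 1/(\#\Gamma\cdot\#\Aut(\Gamma,\delta))$ since the rank-dependent prefactor lies in $(0,1]$, and bootstrap summability of that bound via Fatou, whereas the paper dominates by $\max_n \mu_n(\Gamma,\delta)$ and bounds its sum through a comparison $\mu_{\max}\le c\,\mu_n$; your version of this step is, if anything, slightly cleaner.
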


\begin{proof}
Let $(\Gamma,\delta)$ be a finite abelian $p$-group with duality pairing.  From Theorem~\ref{thm:pparts}, we know $\mu_n(\Gamma,\delta)$, the probability that a random matrix in $\Sym_n(\Z_p)$ has cokernel isomorphic to $(\Gamma,\delta)$.  For each $n$ we have that 
\[
\sum_{m = 0}^{\infty} \, \sum_{(\Gamma,\delta) \in \A(p^m)} \mu_n(\Gamma,\delta) = 1.
\]
Recall that
\[
\lim_{n\rightarrow \infty} \mu_n(\Gamma,\delta) = \frac{\prod_{i=1}^{\infty} (1-p^{1-2i})}{\# \Gamma\cdot \# \Aut(\Gamma,\delta)}.
\]

Let $\mu_{\max}(\Gamma,\delta)=\max_{n} \mu_n(\Gamma,\delta)$.
It is not hard to see that there is an absolute constant $c$ such that for any $n$ at least the rank of $\Gamma$, we have $\mu_{\max}(\Gamma,\delta)\leq c \mu_n(\Gamma,\delta),$ e.g. take $c=\prod_{i\geq 1} (1-2^{-1})^{-2}$. So for any $k$
\begin{align*}
\sum_{m=0}^\infty \sum_{(\Gamma,\delta) \in \A(p^m) \atop r_p(\Gamma) \le k} \mu_{max}(\Gamma,\delta) 
&\le \sum_{m=0}^\infty \sum_{(\Gamma,\delta) \in \A(p^m) \atop r_p(\Gamma) \le k} c \mu_{k}(\Gamma,\delta) \\
&\le \sum_{m=0}^\infty \sum_{(\Gamma,\delta) \in \A(p^m)} c \mu_m(\Gamma,\delta)\\
&\leq c.
\end{align*}
Taking the limit as $k$ goes to infinity shows that
$$
\sum_{m=0}^\infty \sum_{(\Gamma,\delta) \in \A(p^m)} \mu_{max}(\Gamma,\delta) 
$$
converges.
Then by the Lesbesgue Dominated Convergence Theorem, we have
\[
\lim_{n\rightarrow \infty} \sum_{m=0}^\infty \sum_{(\Gamma,\delta) \in \A(p^m)} \mu_n(\Gamma,\delta) =
\sum_{m=0}^\infty \sum_{(\Gamma,\delta) \in \A(p^m)} \lim_{n\rightarrow \infty} \mu_n(\Gamma,\delta) ,
\]
as desired.
\end{proof}

\subsection{The probability that  $\Gamma$ has trivial or cyclic $p$-part}
For each prime $p$, we define
\[
C_p = \prod_{i=1}^{\infty} (1-p^{1-2i}).
\]
We have the following, using the measure $\mu$ defined in the introduction.
\begin{prop}
Let $F_{p-trivial}(\Gamma) = 1$ if $r_p(\Gamma) =  0$, and $0$  otherwise. Then
\[
\int_{(F,\delta)\in \mathcal{A}_p}F(\Gamma)d\mu= C_p.
\]
\end{prop}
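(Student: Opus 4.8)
The plan is to recognize that this integral is nothing more than the $\mu$-measure of a single point. Since $F_{p\text{-trivial}}$ is the indicator function of the set of pairs $(\Gamma,\delta)\in\A_p$ with $r_p(\Gamma)=0$, the integral $\int_{(\Gamma,\delta)\in\A_p}F_{p\text{-trivial}}(\Gamma)\,d\mu$ equals $\mu\bigl(\{(\Gamma,\delta): r_p(\Gamma)=0\}\bigr)$. So the first step is simply to determine which pairs lie in this set.

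The key observation is that the only finite abelian $p$-group with $p$-rank zero is the trivial group. Indeed, $r_p(\Gamma)=\dim_{\Z/p\Z}(\Gamma/p\Gamma)=0$ forces $\Gamma=p\Gamma$, and for a finite group this implies $\Gamma=0$ (by Nakayama's lemma, or directly from the structure theorem for finite abelian $p$-groups). The trivial group carries a unique duality pairing $\delta$, and both $\#\Gamma=1$ and $\#\Aut(\Gamma,\delta)=1$.

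The remaining step is to evaluate $\mu$ at this single pair using the definition $\mu(\Gamma,\delta)=C_p/(\#\Gamma\cdot\#\Aut(\Gamma,\delta))$ recorded in the introduction. Substituting $\#\Gamma=\#\Aut(\Gamma,\delta)=1$ gives $\mu(\{0\},\delta)=C_p$, which is precisely the claimed value.

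I do not expect any genuine obstacle here: once one notices that the indicator $F_{p\text{-trivial}}$ is supported on the single point $(\{0\},\delta)$, the result follows immediately from the normalization of $\mu$ fixed in the introduction. The only point that warrants an explicit word is the claim that no nontrivial $p$-group has vanishing $p$-rank, which is exactly the consequence of the structure theorem cited above.
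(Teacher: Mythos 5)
Your proposal is correct and matches the paper's own argument, which simply observes that the measure $\mu$ of the trivial group is $C_p$; you merely spell out the supporting details (that $r_p(\Gamma)=0$ forces $\Gamma$ trivial, and that $\#\Gamma=\#\Aut(\Gamma,\delta)=1$) that the paper leaves implicit.
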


\begin{proof}
The measure $\mu$ of the trivial group is $C_p$.
\end{proof}

\noindent This shows for example, that the probability that a group obeying our conjectures has trivial $2$-part is a little over $.4194$, and the probability that it has trivial $17$-part is a little over $.9409$.

\begin{prop} \label{P:cyclic}
Let $F_{cyclic}(\Gamma)  = 1$ if $\Gamma$ is cyclic, and $0$ otherwise.  Then 
\[
\int_{(\Gamma,\delta)\in \mathcal{A}_p}F(\Gamma)d\mu
 = \frac{C_p}{1-p^{-1}}.
\]
\end{prop}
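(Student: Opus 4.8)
The plan is to unfold the measure $\mu$, restrict to cyclic $p$-groups, and evaluate the contribution of each such group by an orbit-counting argument. Since $F_{cyclic}$ is supported on cyclic groups and $\mu(\Gamma,\delta) = C_p/(\#\Gamma \cdot \#\Aut(\Gamma,\delta))$, and since the cyclic $p$-groups are exactly $\Z/p^m\Z$ for $m \geq 0$, I would first write
\[
\int_{(\Gamma,\delta)\in\A_p} F_{cyclic}\, d\mu = C_p \sum_{m=0}^\infty \frac{1}{p^m}\,\sum_{\delta} \frac{1}{\#\Aut(\Z/p^m\Z,\delta)},
\]
where the inner sum runs over isomorphism classes of duality pairings $\delta$ on $\Z/p^m\Z$. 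The task thus reduces to evaluating this inner sum for each fixed cyclic group.

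The key device is orbit–stabilizer. The group $\Aut(\Gamma)$ acts on the set $P(\Gamma)$ of \emph{all} duality pairings on $\Gamma$ (via $(\sigma\cdot\delta)(x,y)=\delta(\sigma^{-1}x,\sigma^{-1}y)$); its orbits are precisely the isomorphism classes of pairs $(\Gamma,\delta)$, and the stabilizer of $\delta$ is exactly $\Aut(\Gamma,\delta)$. Counting $P(\Gamma)$ by orbits gives $\#P(\Gamma) = \sum_{\delta}\#\Aut(\Gamma)/\#\Aut(\Gamma,\delta)$, hence
\[
\sum_{\delta} \frac{1}{\#\Aut(\Gamma,\delta)} = \frac{\#P(\Gamma)}{\#\Aut(\Gamma)}.
\]
For $\Gamma = \Z/p^m\Z$ with $m\geq 1$, a pairing is determined by its value $\delta(g,g)\in\tfrac{1}{p^m}\Z/\Z$ on a generator $g$, and nondegeneracy forces this value to have exact order $p^m$, i.e.\ to be a unit multiple of $1/p^m$; thus $\#P(\Z/p^m\Z)=\varphi(p^m)$. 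Since $\#\Aut(\Z/p^m\Z)=\#(\Z/p^m\Z)^\ast=\varphi(p^m)$ as well, the ratio is $1$, and the term for each $m\geq 1$ contributes $p^{-m}$. The case $m=0$ (the trivial group, with one vacuous pairing and trivial automorphism group) contributes $1=p^{0}$.

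Summing the resulting geometric series, $\sum_{m=0}^\infty p^{-m} = (1-p^{-1})^{-1}$, yields $\int F_{cyclic}\,d\mu = C_p/(1-p^{-1})$, as claimed. The only real subtlety is the orbit–stabilizer bookkeeping in the second step: I would resist the temptation to enumerate the isomorphism classes of pairings directly, since that requires analyzing squares in $(\Z/p^m\Z)^\ast$ and bifurcates awkwardly at $p=2$, whereas the orbit count collapses to the clean identity $\#P=\#\Aut$ uniformly in $p$. The remaining care is simply to handle $m=0$ consistently and to confirm the nondegeneracy count of pairings on a cyclic group.
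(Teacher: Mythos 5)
Your proof is correct and follows essentially the same route as the paper: both reduce to showing $\sum_{\delta} 1/\#\Aut(\Z/p^m\Z,\delta)=1$ for each $m$ and then sum the geometric series $\sum_m p^{-m}$. The paper establishes that inner identity by identifying isomorphism classes of pairings with cosets of squares in $(\Z/p^m\Z)^\ast$ and stabilizers with square roots of $1$ (equal by kernel--image for $u\mapsto u^2$), while your orbit--stabilizer identity $\sum_{\delta}1/\#\Aut(\Gamma,\delta)=\#P(\Gamma)/\#\Aut(\Gamma)$ with $\#P=\#\Aut=\varphi(p^m)$ is just a cleaner packaging of the same count.
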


\begin{proof}
By definition, we have
\[
\int_{(F,\delta)\in \mathcal{A}_p}F(\Gamma)d\mu
 =C_p \sum_{m=0}^\infty \sum_{\substack{(\Gamma,\delta) \in \A(p^m)\\ \Gamma \textrm{ cyclic}}} \frac{1}{\# \Gamma\cdot \# \Aut(\Gamma,\delta)}.
\]
\noindent For each $m$ we show that 
\begin{equation}\label{E:tocompute}
\sum_{\substack{(\Gamma,\delta) \in \A(p^m)\\ \Gamma \textrm{ cyclic}}} \frac{1}{\# \Gamma\cdot \# \Aut(\Gamma,\delta)} = \frac{1}{p^m}.
\end{equation}
There is only one cyclic group of given order, so we only need to take the sum over the different pairings.  We claim that the number of isomorphism classes of $(\Z/p^m\Z,\delta)$ is equal to the size of $\Aut(\Z/p^m\Z,\delta)$ for each duality pairing $\delta$.  

A duality pairing $\delta$ on the cyclic group $\Z/p^m\Z$ is determined by its value on $(1,1)$.  Changing basis for $\Z/p^m\Z$, replacing $1$ by a generator $u \in (\Z/p^m\Z)^*$, multiplies this value by a factor of $u^2$.  Therefore, the isomorphism classes of duality pairings on $\Z/p^m\Z$ correspond naturally to the cosets in $(\Z/p^m\Z)^*$ modulo squares, and the number of automorphisms of each pairing is the number of square roots of $1$.  These are equal, since one is the index of the image and one is the size of the kernel for the endomorphism $u \mapsto u^2$.  This proves the claim, and the lemma follows. 
\end{proof}

\subsection{Moments}

The Cohen-Lenstra distribution is a measure on abelian $p$-groups such that the expected number of surjections to any finite abelian $p$-group $\Gamma$ is $1$.  These are often called the \emph{moments} of the distribution, and we now explain why.  
Let $\Gamma \cong \prod_{i=1}^r \Z/p^{f_i}\Z$ with $f_1\le f_2 \le \cdots \le f_r$ and
$\Gamma' \cong \prod_{i=1}^r \Z/p^{e_i}\Z$ with $e_1\le e_2 \le \cdots \le e_r$.
Let $f'_1\geq \cdots$ be the transpose of the partition $f_r\geq \dots \geq f_1$, and 
let $e'_1\geq \cdots$ be the transpose of the partition $e_r\geq \dots \geq e_1$.  Note that the $f_i'$  are a complete set of invariants for the finite abelian $p$-group $\Gamma$ (and thus so are  $p^{f_i'}$).  It is a standard fact that $\# \Hom ( \Gamma, \Gamma')=p^{\sum_i f_i'e_i'}$.
Thus, for any measure $\nu$ on finite abelian $p$-groups
$$
\int_{\Gamma} \#\Hom(\Gamma,\Gamma') d\nu=\int_{\Gamma} \prod_i (p^{f_i'})^{e_i'} d\nu
$$
is the $e'_1,e'_2,\dots$ mixed moment (in the usual sense) of the variables $p^{f_1'}, p^{f_2'}, \dots$.  The averages 
$$
\int_{\Gamma} \#\Hom(\Gamma,\Gamma') d\nu
$$
for all subgroups $\Gamma'$ of an abelian $p$-group $A$ are related to the 
averages 
$$
\int_{\Gamma} \#\Sur(\Gamma,\Gamma') d\nu
$$
for all subgroups $\Gamma'$ of $A$ by an upper-triangular linear transformation (only depending on $A$).  In other words, the $\Hom$ averages, taken together, are equivalent data to the $\Sur$ averages.  However, in practice, the $\Sur$ averages often seem to capture more basic algebraic data, and the $\Hom$ averages are usually best understood as the sum of the $\Sur$ averages over subgroups (e.g. in the case of the Cohen-Lenstra measure all the $\Sur$ averages are $1$).   Hence the $\Sur$ averages are often studied and called the moments.

We now turn to computing moments of measure $\mu$ defined in the introduction.

\begin{thm}\label{T:momint}
Let $\Gamma' \cong \prod_{i=1}^r \Z/p^{e_i}\Z$ with $e_1\le e_2 \le \cdots \le e_r$.
Then 
$$\int_{(\Gamma,\delta)\in\A_p} \#\Sur(\Gamma,\Gamma') d\mu =p^{(r-1)e_1 + (r-2)e_2+\cdots + e_{r-1}}.$$
\end{thm}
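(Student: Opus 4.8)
The plan is to realize the integral as a limit of moments over Haar-random symmetric matrices, so that Theorem~\ref{thm:pparts} does all the arithmetic. Since $\mu(\Gamma,\delta)=\lim_{n\to\infty}\mu_n(\Gamma,\delta)$ and $\mu_n(\Gamma,\delta)$ is the probability that a Haar-random $A\in\Sym_n(\Z_p)$ has $\coker A\cong(\Gamma,\delta)$, I would aim to establish
\[
\int_{\A_p}\#\Sur(\Gamma,\Gamma')\,d\mu=\lim_{n\to\infty}\sum_{(\Gamma,\delta)\in\A_p}\#\Sur(\Gamma,\Gamma')\,\mu_n(\Gamma,\delta)=\lim_{n\to\infty}\mathbb{E}_A\big[\#\Sur(\coker A,\Gamma')\big],
\]
deferring the justification of the first equality (an interchange of limit and infinite sum) to the end. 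The second equality is just the definition of $\mu_n$ as a probability mass, so the real content is to compute the expected number of surjections $\coker A\twoheadrightarrow\Gamma'$ for each fixed $n$.

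For the expectation I would expand by linearity over surjections. A surjection $\coker A\twoheadrightarrow\Gamma'$ is the same as a surjection $\phi\colon\Z_p^n\twoheadrightarrow\Gamma'$ with $\phi\circ A=0$, so
\[
\mathbb{E}_A\big[\#\Sur(\coker A,\Gamma')\big]=\sum_{\phi\in\Sur(\Z_p^n,\Gamma')}\Pr\nolimits_A[\phi\circ A=0].
\]
The key simplification is that $\Pr_A[\phi\circ A=0]$ is the same for every surjection $\phi$. Indeed, Haar measure on $\Sym_n(\Z_p)$ is invariant under $A\mapsto UAU^t$ for $U\in\GL_n(\Z_p)$, and $\phi\circ(UAU^t)=0\iff(\phi\circ U)\circ A=0$; since by elementary divisors any two surjections onto $\Gamma'$ differ by pre-composition with some $U\in\GL_n(\Z_p)$ and post-composition with an element of $\Aut(\Gamma')$ (which does not affect vanishing), all these probabilities coincide. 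It therefore suffices to evaluate $\Pr_A[\phi_0\circ A=0]$ for the standard surjection $\phi_0$ sending $e_i$ to a generator of the $i$-th factor $\Z/p^{e_i}\Z$ for $i\le r$ and $e_i\mapsto 0$ for $i>r$. The condition $\phi_0\circ A=0$ unwinds to $p^{e_i}\mid A_{ij}$ for all $1\le i\le r$ and all $j$. I would then read off the induced condition on each independent entry $A_{ab}$ with $a\le b$: for $a<b\le r$ the two symmetric constraints combine to $p^{\max(e_a,e_b)}=p^{e_b}\mid A_{ab}$, for a diagonal $A_{aa}$ with $a\le r$ one gets $p^{e_a}\mid A_{aa}$, for $a\le r<b$ one gets $p^{e_a}\mid A_{ab}$, and for $r<a\le b$ there is no constraint. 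Multiplying the per-entry probabilities gives
\[
\Pr\nolimits_A[\phi_0\circ A=0]=p^{-(n-r+1)\sum_{i=1}^r e_i-\sum_{k=1}^r(k-1)e_k}.
\]

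To finish the fixed-$n$ computation I would combine this with $\#\Sur(\Z_p^n,\Gamma')=p^{n\sum_i e_i}\prod_{j=n-r+1}^n(1-p^{-j})$, which follows from the surjection count in the proof of Lemma~\ref{lem:numpairings}. The powers of $p$ collapse to the exponent $(r-1)\sum_i e_i-\sum_k(k-1)e_k=\sum_{k=1}^r(r-k)e_k=(r-1)e_1+(r-2)e_2+\cdots+e_{r-1}$, leaving
\[
\mathbb{E}_A\big[\#\Sur(\coker A,\Gamma')\big]=p^{(r-1)e_1+\cdots+e_{r-1}}\prod_{j=n-r+1}^n(1-p^{-j}),
\]
and since the finite product tends to $1$ as $n\to\infty$, the limit is exactly the claimed value.

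The main obstacle is the interchange of limit and infinite sum, which I would handle as in Proposition~\ref{P:normalizing}. Comparing the closed forms for $\mu_n$ and $\mu$ gives a uniform bound $\mu_n(\Gamma,\delta)\le C_p^{-1}\mu(\Gamma,\delta)$ for all $n$, because the finite products appearing in $\mu_n$ are each at most $1$. Applying Fatou's lemma to the nonnegative terms $\#\Sur(\Gamma,\Gamma')\mu_n(\Gamma,\delta)$ first shows $\sum_{(\Gamma,\delta)}\#\Sur(\Gamma,\Gamma')\mu(\Gamma,\delta)$ is finite (bounded by the $\liminf$ of the expectations above), which makes $C_p^{-1}\#\Sur(\Gamma,\Gamma')\mu(\Gamma,\delta)$ a summable dominating function; dominated convergence then gives the deferred equality. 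The only genuinely fiddly point in the body of the argument is the symmetry bookkeeping in $\Pr_A[\phi_0\circ A=0]$: one must be careful that an off-diagonal entry with both indices at most $r$ is pinned down by the \emph{stronger} of its two constraints, and it is precisely this that produces the asymmetric-looking exponent $\sum_k(r-k)e_k$.
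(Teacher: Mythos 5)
Your proposal is correct and follows essentially the same route as the paper: the paper likewise proves the result by first computing the limiting expectation of $\#\Sur(\coker A,\Gamma')$ for Haar-random $A\in\Sym_n(\Z_p)$ (its Theorem~\ref{T:mom}, via the same divisibility count $p^{-\sum_k (n-r+k)e_k}$ on the entries of $A$ after reducing the kernel of the surjection to Smith normal form and invoking invariance of Haar measure under $A\mapsto GAG^t$), and then interchanges the limit with the infinite sum by dominated convergence exactly as in Proposition~\ref{P:normalizing}. The only difference is cosmetic: you keep the exact surjection count $\#\Sur(\Z_p^n,\Gamma')$ and obtain the clean finite-$n$ identity $\mathbb{E}=p^{(r-1)e_1+\cdots+e_{r-1}}\prod_{j=n-r+1}^n(1-p^{-j})$, whereas the paper counts all homomorphisms $\Z_p^n\to\Gamma'$ and argues that almost all of them are surjective as $n\to\infty$.
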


\noindent We prove this result after first establishing the following analogous result in the random matrix case.
\begin{thm}\label{T:mom}
Suppose $\Gamma' \cong \prod_{i=1}^r \Z/p^{e_i}\Z$ with $e_1\le e_2 \le \cdots \le e_r$.  Let $A$ be a random matrix in $\Sym_n(\Z_p)$ with respect to (additive) Haar measure.   As $n$ goes to infinity, the expected number of surjections from the cokernel of $A$ to $\Gamma'$ approaches $p^{(r-1)e_1 + (r-2)e_2+\cdots + e_{r-1}}$.
\end{thm}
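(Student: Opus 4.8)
The plan is to compute this expectation by the standard ``moment method,'' summing over surjections out of the free module. First I would observe that for nonsingular $A$ (which holds with probability $1$), a surjection from $\coker A = \Z_p^n/A\Z_p^n$ onto $\Gamma'$ is the same thing as a surjection $\phi\colon \Z_p^n \to \Gamma'$ that kills the image of $A$, i.e.\ with $\phi\circ A = 0$. Hence, by linearity of expectation over the finite set of surjections $\Z_p^n \to \Gamma'$,
\[
\mathbb{E}\big(\#\Sur(\coker A, \Gamma')\big) = \sum_{\phi \in \Sur(\Z_p^n, \Gamma')} \mathbb{P}(\phi \circ A = 0).
\]
The task then reduces to evaluating each probability and counting the surjections $\phi$, using the formula $\#\Sur(\Z_p^n,\Gamma') = \#\Gamma'^{\,n}\prod_{i=0}^{r-1}(1-p^{i-n})$ established in the proof of Lemma~\ref{lem:numpairings}.

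For a fixed $\phi$, the assignment $A \mapsto \phi\circ A$ is a $\Z_p$-linear map from the compact group $\Sym_n(\Z_p)$ to the finite group $\Hom(\Z_p^n,\Gamma')\cong(\Gamma')^n$, so Haar measure pushes forward to the uniform measure on its image, and $\mathbb{P}(\phi\circ A = 0)$ equals the reciprocal of the size of that image. I would first show this size is independent of $\phi$: the substitution $A \mapsto UAU^t$ for $U\in\GL_n(\Z_p)$ preserves Haar measure and replaces $\phi$ by $\phi\circ U$, while post-composing $\phi$ with an element of $\Aut(\Gamma')$ leaves the event $\phi\circ A = 0$ unchanged; since all surjections $\Z_p^n\to\Gamma'$ form a single orbit under these two actions, it suffices to compute the image for one convenient $\phi$.

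I would then take $\phi$ in standard form: $\phi(e_i)=0$ for $i\le n-r$ and $\phi(e_{n-r+k})=g_k$, the standard generator of the $k$th factor $\Z/p^{e_k}\Z$, for $1\le k\le r$. Writing the $j$th coordinate of $\phi\circ A$ as $\sum_k a_{n-r+k,j}\,g_k$, only the last $r$ rows of $A$ matter, and the image factors over the coordinates $j$. For each of the first $n-r$ coordinates the relevant entries $a_{n-r+k,j}$ are unconstrained Haar variables that independently fill all of $\Gamma'$; since these entries occur in no other coordinate, this yields an independent factor $\#\Gamma'^{\,n-r}$. The remaining factor is the size of the image of the map $B\mapsto (B_{kl}\bmod p^{e_k})_{k,l}$ on the symmetric block $B\in\Sym_r(\Z_p)$ in the lower-right corner. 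Each diagonal entry contributes $p^{e_k}$, while for $k<l$ the \emph{single} parameter $B_{kl}=B_{lk}$ is reduced both mod $p^{e_k}$ and mod $p^{e_l}$; because $e_k\le e_l$ the smaller reduction is determined by the larger, so this pair contributes $p^{e_l}$. Multiplying gives block size $p^{\sum_k e_k + \sum_{k<l} e_l} = p^{\sum_{l} l e_l}$.

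Combining the two factors, the image has size $\#\Gamma'^{\,n-r}\cdot p^{\sum_l l e_l}$, and dividing $\#\Sur(\Z_p^n,\Gamma')$ by this gives $p^{\sum_{l=1}^r (r-l)e_l}\prod_{i=0}^{r-1}(1-p^{i-n})$, whose limit as $n\to\infty$ is $p^{(r-1)e_1+\cdots+e_{r-1}}$, as claimed. The main work, and the only place the symmetry of $A$ genuinely enters, is the block count in the previous paragraph: one must correctly track that a symmetric off-diagonal entry is a single random variable reduced to two different moduli. I would emphasize that, in contrast to Theorem~\ref{thm:pparts}, this argument never uses the classification of $p$-adic symmetric forms, so it runs uniformly for all $p$, including $p=2$.
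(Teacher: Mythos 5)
Your proof is correct and follows essentially the same route as the paper's: both reduce, via invariance of Haar measure under $A \mapsto UAU^t$, to a surjection in standard form and then count the constrained entries of $A$ (your ``image size'' of $A\mapsto\phi\circ A$ is exactly the paper's divisibility conditions on the rows and columns of $A$, and your single-orbit claim is the paper's Smith normal form step applied to the kernel of the surjection). The only cosmetic difference is that you sum over surjections exactly, using the formula for $\#\Sur(\Z_p^n,\Gamma')$, where the paper sums over all homomorphisms $\Z_p^n\to\Gamma'$ and notes that almost all of them are surjective as $n\to\infty$.
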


\begin{proof}
Let $A \in \Sym_n(\Z_p)$ be chosen randomly with respect to Haar measure.  With probability $1,\ \det(A) \neq 0$, so we assume that from now on.  There are $p^{(e_1+e_2+\cdots + e_r) n}$ distinct maps $\Z_p^n \rightarrow \Gamma'$.  As $n$ goes to infinity, the probability that such a map is a surjection goes to $1$.  We choose such a map at random and compute the probability that it factors through the cokernel of $A$.  

The kernel of a surjection from $\Z_p^n$ to $\Gamma'$ is given by the column space of a matrix $B\in M_{n\times n}(\Z_p)$.  Given $B$, we determine the probability that $A \Z_p^n \subset B \Z_p^n$.  This probability is unchanged by a change of basis on $\Z_p^n$.  We put $B$ into Smith normal form, first multiplying on the right and then on the left by matrices in $\GL_n(\Z_p)$.  We choose $G, H \in \GL_n(\Z_p)$ so that $GBH$ is diagonal. Since $G^t \Z_p^n = H \Z_p^n = \Z_p^n$, the probability that $A\Z_p^n \subset B\Z_p^n$ is equal to the probability that $G A G^t \Z_p^n \subset G B H \Z_p^n$.  By the properties of Haar measure, $GAG^t$ is drawn from the same distribution as $A$. 

We now need only compute the probability that $A \in D \Z_p^n$, where $D$ is a diagonal matrix with $r$ diagonal entries of positive valuation, $u_i p^{e_i}$ where each $u_i$ is a unit in $\Z_p$ and $1\le e_1 \le e_2 \le \cdots \le e_r$.  Note that $\Z_p^n/D\Z_p^n \cong \Gamma'$.

The condition that $A \in D \Z_p^n$ can now be phrased in terms of divisibility conditions on the entries of $A$ that are determined by the $e_i$.  Suppose that the $k$th row of $D$ has diagonal entry equal to a unit times $p^{e_i}$.  Then this condition implies that every entry of the $k$th row and column of $A$ must have valuation at least $e_i$.  

We count the number of independent entries affected and see that a random symmetric matrix satisfies all of these conditions with probability $$p^{-e_r n - e_{r-1}(n-1)- \cdots - e_1 (n-(r-1))}.$$  We multiply by the $p^{(e_1+e_2+\cdots + e_r) n}$ maps $\Z_p^n \rightarrow \Gamma'$, which are almost all surjections as $n$ goes to infinity.  This shows that the expected number of surjections is $p^{(r-1)e_1+(r-2) e_2+\cdots + e_{r-1}}$.
\end{proof}

For a partition $\lambda$ given by $\lambda_1\geq \lambda_2\geq \dots$, we let $\Gamma'_\lambda =\oplus_i \Z/p^{\lambda_i}\Z$.
Let $\lambda'$ denote the transpose of $\lambda$.
Note that $\sum_i (i-1)\lambda_i$ is the sum over boxes in the partition diagram of $\lambda$ of $i-1$, where $i$ in the row the box appears in.  Summing by column, we obtain
$\sum_i (i-1)\lambda_i=
\sum_j \frac{\lambda'_j(\lambda'_j-1)}{2}.
$ 
So we have, when $n-1\geq \lambda'_1$, the expected number of surjections from the cokernel of $A\in \Sym_n(\Z_p)$ to $\Gamma'_\lambda$ is given by 
\begin{equation}\label{E:ESur}
 p^{\sum_i (i-1)\lambda_i}=p^{\sum_j \frac{\lambda'_j(\lambda'_j-1)}{2}}.
\end{equation}

\begin{proof}[Proof of Theorem~\ref{T:momint}] 

In Theorem~\ref{T:mom}, 
we have computed 
\[
\lim_{n\rightarrow \infty} \sum_{m=0}^\infty \sum_{(\Gamma,\delta) \in \A(p^m)} \mu_n(\Gamma,\delta) \#\Sur(\Gamma,\Gamma'),
\] and so it remains to show that
\[
\lim_{n\rightarrow\infty} \sum_{m=0}^\infty \sum_{(\Gamma,\delta) \in \A(p^m)}  \mu_n(\Gamma,\delta) \#\Sur(\Gamma,\Gamma')=
 \sum_{m=0}^\infty \sum_{(\Gamma,\delta) \in \A(p^m)} \lim_{n\rightarrow \infty}  \mu_n(\Gamma,\delta) \#\Sur(\Gamma,\Gamma').
\]
We have 
\[
\mu_n(\Gamma,\delta) = \frac{\prod_{j=n-r+1}^n (1-p^{-j}) \prod_{i=1}^{\lceil(n-r)/2\rceil} (1-p^{1-2i})}{\# \Gamma \cdot \# \Aut(\Gamma,\delta)},
\]
where $r$ is the rank of $\Gamma$.  

We closely follow the proof of Proposition~\ref{P:normalizing}.  Let $\mu_{\max}(\Gamma,\delta):=\max_{n} \mu_n(\Gamma,\delta)$.
It is not hard to see that there is an absolute constant $c$ such that for any $n$ at least the rank of $\Gamma$, we have $\mu_{\max}(\Gamma,\delta)\leq c \mu_n(\Gamma,\delta),$ e.g. take $c=\prod_{i\geq 1} (1-2^{-1})^{-2}$.
So   for any $k$
\begin{align*}
\sum_{m=0}^\infty \sum_{(\Gamma,\delta) \in \A(p^m) \atop r_p(\Gamma) \le k} \mu_{max}(\Gamma,\delta) \#\Sur(\Gamma,\Gamma')
&\le \sum_{m=0}^\infty \sum_{(\Gamma,\delta) \in \A(p^m) \atop r_p(\Gamma) \le k} c \mu_m(\Gamma,\delta)\#\Sur(\Gamma,\Gamma')\\
&\le \sum_{m=0}^\infty \sum_{(\Gamma,\delta) \in \A(p^m)} c \mu_m(\Gamma,\delta)\#\Sur(\Gamma,\Gamma').
\end{align*}
Taking the limit as $k\rightarrow\infty$, we see the last term is bounded by Theorem~\ref{T:mom}.
So we have that 
$$
\sum_{m=0}^\infty \sum_{(\Gamma,\delta) \in \A(p^m)} \mu_{max}(\Gamma,\delta) \#\Sur(\Gamma,\Gamma')
$$
converges.
Then by the Lesbesgue Dominated Convergence Theorem, we have
\[
\lim_{n\rightarrow \infty} \sum_{m=0}^\infty \sum_{(\Gamma,\delta) \in \A(p^m)} \mu_n(\Gamma,\delta) \#\Sur(\Gamma,\Gamma') 
\]
is equal to 
\[
 \sum_{m=0}^\infty \sum_{(\Gamma,\delta) \in \A(p^m)} \lim_{n\rightarrow \infty} \mu_n(\Gamma,\delta) \#\Sur(\Gamma,\Gamma'),
\]
as desired.
\end{proof}

We now use this result to deduce the expectation of $r_p(A)$.  We recall the definition of the Gaussian binomial coefficient.  Let 
\[
\binom{k}{j}_p:= \prod_{i=0}^{j-1} \frac{p^k-p^i}{p^j-p^i}.
\]
This counts the number of $j$ dimensional subspaces of $\left(\Z/p\Z\right)^k$.  Note that $\binom{k}{0}_p = 1$ for any $k$ and $p$ since both the numerator and denominator consist of the empty product. 

\begin{thm}
 For a finite abelian $p$-group $\Gamma$, let $r_p(\Gamma)$ denote the $p$-rank of $\Gamma$, so $p^{r_p(\Gamma)}$ is the size of $\Gamma / p\Gamma$.  We have
\[
\int_{(\Gamma,\delta)\in\A_p} p^{k\cdot r_p(\Gamma)} d\mu = \prod_{j=0}^{k-1} (p^j+1).
\]
\end{thm}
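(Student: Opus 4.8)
The plan is to recognize $p^{k\,r_p(\Gamma)}$ as the number of homomorphisms from $\Gamma$ into $(\Z/p\Z)^k$, then to decompose these homomorphisms according to their image so as to reduce the computation to the surjection moments already evaluated in Theorem~\ref{T:momint}, and finally to sum the resulting finite series using a $q$-binomial identity.

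First I would note that, because $(\Z/p\Z)^k$ is annihilated by $p$, every homomorphism $\Gamma \to (\Z/p\Z)^k$ factors through $\Gamma/p\Gamma \cong (\Z/p\Z)^{r_p(\Gamma)}$. Counting $\F_p$-linear maps then gives the pointwise identity $\#\Hom(\Gamma,(\Z/p\Z)^k) = p^{k\,r_p(\Gamma)}$ for every finite abelian $p$-group $\Gamma$. Next I would partition the set of such homomorphisms by their image: the image of a map $\Gamma \to (\Z/p\Z)^k$ is an $\F_p$-subspace $W$, and the map is a surjection onto $W$. Grouping by $W$, recording that a subspace of dimension $j$ is isomorphic to $(\Z/p\Z)^j$, and that there are $\binom{k}{j}_p$ such subspaces, yields for each $\Gamma$
\[
p^{k\,r_p(\Gamma)} = \sum_{j=0}^k \binom{k}{j}_p \,\#\Sur\!\big(\Gamma,(\Z/p\Z)^j\big).
\]

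Since this is a finite sum, I can integrate term by term against $\mu$. Applying Theorem~\ref{T:momint} to $\Gamma' = (\Z/p\Z)^j$, where $r = j$ and $e_1 = \dots = e_j = 1$, the exponent $(r-1)e_1 + (r-2)e_2 + \cdots + e_{r-1}$ becomes $(j-1)+(j-2)+\cdots+1 = \binom{j}{2}$, so that
\[
\int_{(\Gamma,\delta)\in\A_p} \#\Sur\!\big(\Gamma,(\Z/p\Z)^j\big)\, d\mu = p^{\binom{j}{2}}.
\]
Combining the two displays gives
\[
\int_{(\Gamma,\delta)\in\A_p} p^{k\,r_p(\Gamma)}\, d\mu = \sum_{j=0}^k \binom{k}{j}_p\, p^{\binom{j}{2}}.
\]

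Finally I would invoke the finite $q$-binomial theorem $\prod_{i=0}^{k-1}(1 + x q^i) = \sum_{j=0}^k q^{\binom{j}{2}} \binom{k}{j}_q x^j$, specialized at $q = p$ and $x = 1$, to recognize the right-hand side as $\prod_{j=0}^{k-1}(p^j + 1)$, which is exactly the claimed value. The only genuinely substantive input is Theorem~\ref{T:momint}; everything else is bookkeeping, so I expect the main (and rather modest) obstacle to be pinning down the $q$-binomial identity in the precise normalization needed and double-checking that the exponent coming out of Theorem~\ref{T:momint} is indeed $\binom{j}{2}$.
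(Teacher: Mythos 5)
Your proposal is correct and follows essentially the same route as the paper: identify $p^{k\,r_p(\Gamma)}$ with the number of homomorphisms to $(\Z/p\Z)^k$, decompose by image into surjections onto the $\binom{k}{j}_p$ subspaces of each dimension $j$, apply Theorem~\ref{T:momint} to get the moment $p^{\binom{j}{2}}$, and finish with the $q$-binomial theorem at $x=1$. The exponent check and the normalization of the $q$-binomial identity both come out exactly as you anticipated.
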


\begin{proof}
We show that the left hand side of the statement is equal to
\[
\sum_{j=0}^{k} p^{j(j-1)/2} \binom{k}{j}_p.
\]
Applying the $q$-binomial theorem, formula (1.87) in \cite{Stanley12}, with $x=1$ completes the proof.

Let $\Gamma' = \left(\Z/p\Z\right)^k$.  The number of maps to $\Gamma'$ is $p^{k\cdot r_p(A)}$.  Such a map surjects onto a subgroup of $\Gamma'$ of size $p^j$ for some $j \in [0,k]$.  By the above theorem, the expected number of surjections onto a particular subgroup isomorphic to $\left(\Z/p\Z\right)^j$ is $p^{j(j-1)/2}$.  The number of subgroups of $\Gamma'$ isomorphic to $\left(\Z/p\Z\right)^j$ is equal to $\binom{k}{j}_p$.
\end{proof}

The expectation $p^{k\cdot r_p(A)}$ satisfies a particularly nice form, which suggests that the $p$-rank of the Jacobian of a random graph also satisfies a nice distribution.  This distribution has been determined by the fifth author as Corollary 9.4 of \cite{Wood14}.

\section{Empirical evidence}\label{S:Data}

Here we present some computational evidence for the conjectures stated in the introduction.  The code used to generate the data is available as part of the arxiv version of this paper \cite{arxiv}, after the {\textbackslash}end{\textbraceleft}document{\textbraceright} line in the source file.

\subsection{The probability that a random graph is cyclic}

We computed the Jacobians of $10^6$ connected random graphs with $n$ vertices and edge probability $q$, for $n \in \{ 15, 30, 45, 60\}$ and $q \in \{.3, .5, .7\}$.  (When disconnected graphs appeared, we discarded them without computing the Jacobians.)  The following table displays the proportions of these graphs with cyclic Jacobians.

\vspace{.25 cm}

\begin{center}
  \begin{tabular}{ |c | c | c | c | }
  \hline
    n$\setminus$ q & .3 & .5 & .7 \\ \hline
    15 & .784255 & .792895 & .775746 \\ \hline
    30 & .793807 & .793570 & .793375 \\ \hline
    45 & .793308 & .793962 & .793637 \\ \hline
    60 & .793436 & .793694 & .79354 \\ 
    \hline
  \end{tabular}
 \end{center}

\vspace{.25 cm}

\noindent We believe these data support Conjecture~\ref{C:cyc}, which predicts that the probability that $\Gamma(n,q)$ is cyclic tends to a limit slightly higher than $.7935$ as $n$ tends to infinity.

\subsection{Relative frequencies of $2$-groups with duality pairings}

We computed the Sylow $2$-subgroups with duality pairing for $10^5$ random graphs on $20$ vertices with edge probability $.5$, and compared the frequency of each group of size at most 8 with the frequency of the trivial group.  

In order to distinguish between the pairings that we observed, we recall the classification of finite abelian $p$-groups with duality pairing, following the presentation in \cite{Miranda84}, in which these results are attributed to Wall \cite{Wall64}.  Let $\mathcal{B}_p$ denote the semigroup of isomorphism classes of finite abelian $p$-groups with duality pairing under orthogonal direct sum.  The classification of symmetric bilinear forms on abelian $2$-groups is more complicated than for other $p$-groups \cite{Miranda84}.

\begin{prop}\label{mirandap2}
The semigroup $\mathcal{B}_2$ is generated by forms of the following types:
\begin{align*}
A_{2^r} & \text{ on } \Z/2^r\Z,\ r\ge 1; \langle 1,1\rangle = 2^{-r}, \\
B_{2^r} & \text{ on } \Z/2^r\Z,\ r\ge 2; \langle 1,1 \rangle = -2^{-r}, \\
C_{2^r} & \text{ on } \Z/2^r\Z,\ r\ge 3; \langle 1,1 \rangle = 5 \cdot 2^{-r},\\
D_{2^r} & \text{ on } \Z/2^r\Z,\ r\ge 3; \langle 1,1 \rangle = -5\cdot 2^{-r},\\
E_{2^r} & \text{ on } \Z/2^r\Z \times \Z/2^r \Z,\ r\ge 1; \langle e_i, e_j \rangle = \begin{cases} 0 & \text{if } i =j, \\ 2^{-r}& \text{if } i\neq j, \end{cases}\\
F_{2^r} &  \text{ on } \Z/2^r\Z \times \Z/2^r \Z,\ r\ge 2; \langle e_i, e_j \rangle = \begin{cases} 2^{-(r-1)} & \text{if } i =j, \\ 2^{-r}& \text{if } i\neq j \end{cases}.
\end{align*}
\end{prop}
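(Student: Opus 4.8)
The plan is to realize $(\Gamma,\delta)$ as the cokernel-with-pairing of a symmetric nonsingular matrix over $\Z_2$, decompose that matrix into blocks via Theorem~\ref{CS}, and read off the orthogonal summands one block at a time. First I would observe that, by Lemma~\ref{lem:numpairings}, for any $n$ at least the rank of $\Gamma$ there is a symmetric bilinear pairing $\sqpair \colon \Z_2^n \times \Z_2^n \to \Q_2/\Z_2$ whose cokernel with its induced pairing is isomorphic to $(\Gamma,\delta)$; exactly as in the proof of Theorem~\ref{thm:pparts} for $p=2$, such a pairing equals $\pair_M$ for some nonsingular $M \in \Sym_n(\Z_2)$, and since $M$ is nonsingular one has $\coker M \isom \Gamma$ with pairing $\langle x,y\rangle_M = y^t M^{-1} x$. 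It therefore suffices to exhibit every $(\coker M, \pair_M)$ as an orthogonal direct sum of the listed forms, which reduces the proposition to a statement about a single symmetric matrix over $\Z_2$.

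Next I would apply Theorem~\ref{CS} to choose $H \in \GL_n(\Z_2)$ making $HMH^t$ block diagonal, with $1\times 1$ blocks $u_i2^{d_i}$ and $2\times 2$ blocks $2^{e_i}\left(\begin{smallmatrix} a & b \\ b & c\end{smallmatrix}\right)$ as in the theorem. Replacing $M$ by $HMH^t$ is a change of basis on $\Z_2^n$, so it alters neither $\coker M$ nor the isomorphism class of $\pair_M$. Because $(HMH^t)^{-1}$ is again block diagonal with the same block structure, the pairing $\pair_M$ splits as an orthogonal direct sum indexed by the blocks, and correspondingly $\coker M$ is the orthogonal direct sum of the cokernels of the individual blocks. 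This reduces the problem to identifying the form attached to a single block.

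The block computation is then explicit. A $1\times 1$ block $u2^d$ contributes $\Z/2^d\Z$ with $\langle 1,1\rangle = u^{-1}2^{-d}$; the block $d=0$ contributes nothing, while for $d\ge 1$ the isomorphism class depends only on the class of $u^{-1}$ in $(\Z/2^d\Z)^*$ modulo squares, since rescaling the generator $1\mapsto v$ multiplies $\langle 1,1\rangle$ by $v^2$. As $(\Z/2\Z)^*$ is trivial, $(\Z/4\Z)^*$ has two square classes, and $(\Z/2^d\Z)^*$ has four for $d\ge 3$ with representatives $1,-1,5,-5$, these cyclic forms are exactly $A_{2^d}$ ($d\ge1$), $B_{2^d}$ ($d\ge2$), and $C_{2^d},D_{2^d}$ ($d\ge3$); the stated rank restrictions simply record the square-class coincidences at small $d$. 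A $2\times 2$ block $2^e\left(\begin{smallmatrix} a & b \\ b & c\end{smallmatrix}\right)$ with $b\in\Z_2^*$ and $a,c\in 2\Z_2$ has odd determinant after dividing out $2^e$, so its cokernel is $(\Z/2^e\Z)^2$ and its pairing is $2^{-e}$ times an even unimodular form; reducing and scaling shows this form is equivalent either to the hyperbolic type $\left(\begin{smallmatrix} 0 & 1 \\ 1 & 0\end{smallmatrix}\right)$ or to $\left(\begin{smallmatrix} 2 & 1 \\ 1 & 2\end{smallmatrix}\right)$, yielding $E_{2^e}$ ($e\ge1$) or $F_{2^e}$ ($e\ge2$).

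The hard part will be this final step: verifying that exactly these six families, and no others, arise from the blocks of Theorem~\ref{CS}. This is the arithmetic bookkeeping of $\Z_2^*$ modulo squares together with the reduction of the indecomposable even $2\times 2$ forms to their two normal forms, and the identification of the small-$r$ coincidences (such as $A_2\isom B_2$, $A_4\isom C_4$, and $F_2\isom E_2$) that produce the index restrictions $r\ge 2$ and $r\ge 3$ in the statement. Since the proposition only asserts that these forms \emph{generate} the semigroup $\mathcal{B}_2$, rather than giving a full set of relations, I do not need uniqueness of the decomposition; it is enough to show that each block lands among the six families, which the block-by-block analysis above provides.
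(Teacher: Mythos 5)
The paper does not actually prove this proposition: it is recalled from \cite{Miranda84}, where it is attributed to Wall \cite{Wall64}, so there is no in-paper argument to compare against. Your proposal supplies a genuine derivation, and it is sound; moreover it is essentially the classical route (and the one implicit in Wall/Miranda), carried out with tools the paper already has on hand. Realizing $(\Gamma,\delta)$ as $(\coker M,\pair_M)$ for a nonsingular $M\in\Sym_n(\Z_2)$ via Lemma~\ref{lem:numpairings} and the construction in the proof of Theorem~\ref{thm:pparts} for $p=2$, then block-diagonalizing by Theorem~\ref{CS} and observing that a block-diagonal matrix inverts blockwise, correctly reduces the classification to single blocks; this is exactly the splitting the paper exploits analytically in Section~\ref{sec:cokernels} but never uses to re-derive the Wall--Miranda list. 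What buys you the result over the bare citation is self-containedness; what remains to be written out is concentrated in two sub-claims you correctly flag but only assert. First, the square-class computation: $(\Z/2^d\Z)^*$ modulo squares is trivial for $d=1$, of order $2$ for $d=2$, and of order $4$ with representatives $\pm1,\pm5$ for $d\ge 3$ -- this is standard and also accounts for the coincidences ($-1\equiv 1 \bmod 2$, $5\equiv 1\bmod 4$) that force the index restrictions $r\ge 2$, $r\ge 3$. Second, and more substantively, Theorem~\ref{CS} as stated only says the $2\times2$ blocks have $b\in\Z_2^*$ and $a,c\in2\Z_2$; it does \emph{not} assert that such a block is $\GL_2(\Z_2)$-congruent to $2^{e}\left(\begin{smallmatrix}0&1\\1&0\end{smallmatrix}\right)$ or $2^{e}\left(\begin{smallmatrix}2&1\\1&2\end{smallmatrix}\right)$, so that normalization needs its own short argument (for instance, the determinant $ac-b^2$ is $\equiv -1$ or $3 \bmod 8$ according to whether $ac\equiv 0$ or $4 \bmod 8$, and each determinant class is realized by exactly one congruence class). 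Since the proposition claims only that these forms generate $\mathcal{B}_2$, you are right that no uniqueness or relations are required, so the block-by-block containment is all that is needed.
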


\noindent The relations between these generators for $\mathcal{B}_2$ are complicated and we will not need them here.

\bigskip

\begin{center}
$\begin{array}{|c|c|c|c|c|}
\hline
\mathbf{(\Gamma, \delta)}&$\#$\textbf{Aut}\mathbf{(\Gamma, \delta)}&\textbf{Proportion}& \textbf{Observed}&\textbf{Expected}\\
& &\textbf{of Sample}&\textbf{Ratio}&\textbf{Ratio} \\

\hline
1&1&.419161&1&1\\
\hline
A_2&1&.210371&1.99249&2\\
\hline
A_4&2&.0518826&8.07903&8\\
\hline
B_4&2&.0522326&8.02489&8\\
\hline
A_2\oplus A_2 &2&.0517726&8.0962&8 \\
\hline
 E_4 &6&.0170709&24.5542&24 \\ 
\hline
A_8 &4&.0129706&32.3161&32\\
\hline
B_8&4&.0131007&31.9953&32 \\
\hline
C_8&4&.0132507&31.6332&32\\
\hline
D_8&4 &.0129206&32.4412&32\\
\hline
A_2 \oplus A_4 &2&.0259813&16.1332&16\\
\hline
A_2 \oplus A_2 \oplus A_2 &6&.00888044&47.2005&48 \\
\hline
\end{array}$
\end{center}

\bigskip

\noindent Here, the observed ratio is the observed frequency of the trivial group $.419161$ divided by the observed frequency of $(\Gamma, \delta)$, while the expected ratio is the factor $\# \Gamma \cdot \# \text{Aut}\;(\Gamma,\delta)$ predicted by Conjecture~\ref{C:basic}, in the special case where $F$ depends only on the Sylow $2$-subgroup of $\Gamma$ with its restricted pairing.

\subsection{Relative frequencies of $3$-groups with duality pairings}

We now recall the classification of finite abelian $p$-groups with duality pairing for odd primes $p$ \cite{Miranda84}.

\begin{prop}\label{miranda}
If $p$ is odd, the semigroup $\mathcal{B}_p$ is generated by cyclic groups with pairings of the following two types:
\begin{align*}
A_{p^r} & \text{ on } \Z/2^r\Z,\ r\ge 1; \langle 1,1\rangle = p^{-r}, and \\
B_{p^r} & \text{ on } \Z/2^r\Z,\ r\ge 1; \langle 1,1 \rangle = \alpha p^{-r},
\end{align*}
where $\alpha$ is a quadratic non-residue mod $p$.
\end{prop}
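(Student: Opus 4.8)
The plan is to reduce the classification to the cyclic case and then enumerate duality pairings on a cyclic group. First I would realize an arbitrary pair $(\Gamma,\delta)$ in $\mathcal{B}_p$ as the cokernel, with its induced duality pairing, of a nonsingular symmetric matrix $A \in \Sym_n(\Z_p)$; this is possible by the setup at the start of Section~\ref{sec:cokernels}, and Lemma~\ref{lem:numpairings} shows each $(\Gamma,\delta)$ genuinely occurs. Because $p$ is odd, I would then invoke the diagonalization of symmetric $\Q_p$-forms cited above (\cite[Section~15.4.4]{ConwaySloane99}): there is $H \in \GL_n(\Z_p)$ with $HAH^t = \operatorname{diag}(u_1 p^{d_1}, \dots, u_n p^{d_n})$, each $u_i \in \Z_p^\times$. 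Since congruence by $H$ induces an isomorphism of the pairings on the cokernels, this exhibits $(\Gamma,\delta)$ as an orthogonal direct sum of the cyclic pieces $\Z/p^{d_i}\Z$ (discarding the trivial factors with $d_i = 0$), the $i$th carrying the pairing $\langle 1,1\rangle = u_i^{-1} p^{-d_i}$. Hence every element of $\mathcal{B}_p$ is an orthogonal sum of cyclic groups with pairings of the form $\langle 1,1\rangle = u\,p^{-r}$, $u \in \Z_p^\times$.

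Second, I would classify these cyclic pieces. A duality pairing on $\Z/p^r\Z$ is determined by its value $\langle 1,1\rangle = u\,p^{-r}$, and the requirement that the induced map to $\Hom(\Z/p^r\Z,\Q/\Z)$ be an isomorphism forces $u \in (\Z/p^r\Z)^\times$. Replacing the generator $1$ by another generator $v$ multiplies $u$ by $v^2$, so the isomorphism classes of such pairings are in bijection with $(\Z/p^r\Z)^\times$ modulo squares. For odd $p$ the group $(\Z/p^r\Z)^\times$ is cyclic of even order, so the squares form a subgroup of index $2$, and a unit is a square modulo $p^r$ if and only if it is a quadratic residue modulo $p$ (by Hensel lifting). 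Thus there are exactly two classes, represented by $u = 1$ and $u = \alpha$ for a fixed nonresidue $\alpha$; these are precisely $A_{p^r}$ and $B_{p^r}$.

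Combining the two steps shows that $\mathcal{B}_p$ is generated under orthogonal direct sum by the forms $A_{p^r}$ and $B_{p^r}$ with $r \ge 1$, as claimed. I expect the main obstacle to be the orthogonal-decomposition step: one must verify that the congruence $A \mapsto HAH^t$ really carries $\delta_A$ to $\delta_{HAH^t}$ isomorphically, and that a diagonal matrix splits the cokernel orthogonally with the stated cyclic pairings on the factors. An alternative route that sidesteps realizability is to argue intrinsically: using that $2 \in \Z_p^\times$ for odd $p$, the polarization identity $\delta(g+h,g+h) = \delta(g,g) + 2\delta(g,h) + \delta(h,h)$ lets one locate an element $g$ of maximal order whose self-pairing $\delta(g,g)$ has full order, so that $\langle g\rangle$ splits off as a nondegenerate orthogonal summand; induction on $\langle g\rangle^\perp$ then yields the cyclic decomposition directly. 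Either way, the (complicated) relations among the generators play no role.
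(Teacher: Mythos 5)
The paper does not actually prove this proposition: it is stated as a recollection of the classification of duality pairings on finite abelian $p$-groups, with a citation to Miranda and Wall, so there is no internal proof to match yours against. That said, your argument is correct and is a genuine proof, and it fits naturally with the machinery the paper already has. Your first step (realizability of any $(\Gamma,\delta)$ as the cokernel pairing of a nonsingular $A\in\Sym_n(\Z_p)$, followed by $\GL_n(\Z_p)$-congruence diagonalization for odd $p$) is exactly the mechanism used in the odd-$p$ proof of Theorem~\ref{thm:pparts}: realizability follows from the bijection between surjections $\Z_p^n\to\Gamma$ and pairings on $\Z_p^n$ with cokernel $(\Gamma,\delta)$ in Lemma~\ref{lem:numpairings}, and the check that $x\mapsto Hx$ carries $\delta_A$ isomorphically to $\delta_{HAH^t}$ is the one-line computation $\langle Hx,Hy\rangle_{HAH^t}=y^tA^{-1}x$. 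Your second step (pairings on $\Z/p^r\Z$ are classified by $\langle 1,1\rangle=u\,p^{-r}$ with $u$ taken in $(\Z/p^r\Z)^\times$ modulo squares, hence two classes for odd $p$) reproduces the computation the paper does carry out inside the proof of Proposition~\ref{P:cyclic}. The sources the paper cites instead use an intrinsic argument close to your alternative sketch: for odd $p$, polarization and the invertibility of $2$ let one split off a cyclic orthogonal summand generated by an element of maximal order with self-pairing of full order, and one inducts on the orthogonal complement. The intrinsic route is self-contained and works for general $(\Gamma,\delta)$ without passing through matrices; your matrix route is shorter given Section~\ref{sec:cokernels}, but it does lean on the Conway--Sloane diagonalization, which is essentially equivalent in content. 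Either way you are right that the relations among the generators are irrelevant for the generation statement.
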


\noindent The semigroup $\mathcal{B}_p$ is not free on these generators; the relations are generated by $A_{p^r} \oplus A_{p^r} = B_{p^r} \oplus B_{p^r}$.

We computed the Sylow $3$-subgroup of the Jacobians on a sample of $10^5$ random graphs on 20 vertices with edge probability $.5$, and compared the relative frequency of each group with pairing of size at most 9 with the frequency of the trivial group.

\bigskip

\begin{center}$\begin{array}{|c|c|c|c|c|}
\hline
\mathbf{(\Gamma, \delta)}&$\#$\textbf{Aut}\mathbf{(\Gamma,\delta)}&\textbf{Proportion}& \textbf{Observed}&\textbf{Expected}\\
&&\textbf{of Sample}&\textbf{Ratio}&\textbf{Ratio} \\
\hline
1&1&.638566&1&1\\
\hline
A_3&2&.106104&6.01828&6\\
\hline
B_3&2&.106324&6.00583&6\\
\hline
A_9&2&.0349714&18.2597&18 \\
\hline
B_9&2&.0355414&17.9668&18\\
\hline
A_3 \oplus A_3 &8&.00905036&70.5569&72\\
\hline
A_3 \oplus B_3 &4&.0177307&36.0147&36 \\
\hline
\end{array}$
\end{center}

\bigskip

\noindent We found similar data for Sylow 5-subgroups and Sylow 7-subgroups of Jacobians of random graphs.

\subsection{Relative frequencies at two places}

Any duality pairing on a finite abelian group decomposes as an orthogonal direct sum of duality pairings on the Sylow $p$-subgroups.  Here we  give data supporting the hypothesis that the $p$-parts of Jacobians of random graphs are independent for distinct primes.

We computed the Sylow 2-subgroups and 3-subgroups of $2\cdot 10^5$ random graphs on 30 vertices with edge probability $.5$.  In 53366 cases, both of these subgroups were trivial.  The following table displays the ratio of the frequency of the trivial group to the frequency of each of the duality pairings on $\Z/2\Z \times \Z/2\Z \times \Z/3\Z$. 

\bigskip

\begin{center}
\begin{tabular}{|c|c|c|c|c|}
\hline
$\mathbf{(\Gamma, \delta)}$&\textbf{$\#\text{Aut}(\Gamma,\delta)$}& \textbf {Proportion} & \textbf{Observed} & \textbf{Expected} \\
& & \textbf{in Sample}& \textbf{Ratio} & \textbf{Ratio} \\
\hline
$A_2 \oplus A_2 \oplus A_3$&4& .005655 &47.1848&48\\
\hline
$A_2 \oplus A_2 \oplus B_3$&4& .005420 & 49.2306&48\\
\hline
$A_3 \oplus E_4$&12& .001865 &143.072&144\\
\hline
$B_3 \oplus E_4$&12& .001965 &135.791&144\\
\hline
\end{tabular}
\end{center}

\bigskip

\section{Acknowledgments}

The authors thank Matt Baker, Wei Ho,  Matt Kahle, and the referees.
  The fourth author was supported in part by NSF grant DMS-1068689 and NSF CAREER grant DMS-1149054.
The fifth author was supported by an American Institute of Mathematics Five-Year Fellowship and National Science Foundation grants DMS-1147782 and DMS-1301690.

\newcommand{\etalchar}[1]{$^{#1}$}
\providecommand{\bysame}{\leavevmode\hbox to3em{\hrulefill}\thinspace}
\providecommand{\MR}{\relax\ifhmode\unskip\space\fi MR }
\providecommand{\MRhref}[2]{%
  \href{https://urldefense.proofpoint.com/v2/url?u=http-3A__www.ams.org_mathscinet-2Dgetitem-3Fmr-3D-231&d=AwIFAg&c=-dg2m7zWuuDZ0MUcV7Sdqw&r=HXetpLHMphcM0tNPAuuorMjjwezLst1bZHvaSAGtD5M&m=-U9L2H6dbSz3KJGqUnsxhNvMKmlp5ifcI85Khhu_GXM&s=RLp4VGH2hR3q6HkuSf5_l1-F25RrJ7oTaaKdDOINeJs&e= }{#2}
}
\providecommand{\href}[2]{#2}


\begin{thebibliography}{99}

\bibitem[1]{BKLPR13}
M.~Bhargava, D.~Kane, H.~Lenstra, B.~Poonen, and E.~Rains, \emph{Modeling the
  distribution of ranks, {S}elmer groups, and {S}hafarevich-{T}ate groups of
  elliptic curves}, preprint arXiv:1304.3971, 2013.

\bibitem[2]{BannaiMunemasa98}
E.~Bannai and A.~Munemasa, \emph{Duality maps of finite abelian groups and
  their applications to spin models}, J. Algebraic Combin. \textbf{8} (1998),
  no.~3, 223--233.

\bibitem[3]{Cassels62}
J.~Cassels, \emph{Arithmetic on curves of genus {$1$}. {IV}. {P}roof of the
  {H}auptvermutung}, J. Reine Angew. Math. \textbf{211} (1962), 95--112.

\bibitem[4]{CohenLenstra84}
H.~Cohen and H.~Lenstra, \emph{Heuristics on class groups of number fields},
  Number theory, {N}oordwijkerhout 1983 ({N}oordwijkerhout, 1983), Lecture
  Notes in Math., vol. 1068, Springer, Berlin, 1984, pp.~33--62.

\bibitem[5]{CohenLenstra84b}
H.~Cohen and H.~Lenstra, Jr., \emph{Heuristics on class groups}, Number theory
  ({N}ew {Y}ork, 1982), Lecture Notes in Math., vol. 1052, Springer, Berlin,
  1984, pp.~26--36.

\bibitem[6]{arxiv}
J.~Clancey, N.~Kaplan, T.~Leake, S.~Payne, and M.~Wood, \emph{On a {C}ohen-{L}enstra heuristic for {J}acobians of random graphs}, arXiv:1402.5129.

\bibitem[7]{CLP13}
J.~Clancey, T.~Leake, and S.~Payne, \emph{A note on {J}acobians, {T}utte
  polynomials, and two-variable zeta functions of graphs}, Experiment. Math. \textbf{24} (2015), 1--7.
  
\bibitem[8]{ConwaySloane99}
J.~H. Conway and N.~J.~A. Sloane, \emph{Sphere packings, lattices and groups},
  third ed., Grundlehren der Mathematischen Wissenschaften [Fundamental
  Principles of Mathematical Sciences], vol. 290, Springer-Verlag, New York,
  1999, With additional contributions by E. Bannai, R. E. Borcherds, J. Leech,
  S. P. Norton, A. M. Odlyzko, R. A. Parker, L. Queen and B. B. Venkov.

\bibitem[9]{Delaunay01}
C.~Delaunay, \emph{Heuristics on {T}ate-{S}hafarevitch groups of elliptic
  curves defined over {$\Bbb Q$}}, Experiment. Math. \textbf{10} (2001), no.~2,
  191--196.

\bibitem[10]{Delaunay07}
\bysame, \emph{Heuristics on class groups and on {T}ate-{S}hafarevich groups:
  the magic of the {C}ohen-{L}enstra heuristics}, Ranks of elliptic curves and
  random matrix theory, London Math. Soc. Lecture Note Ser., vol. 341,
  Cambridge Univ. Press, Cambridge, 2007, pp.~323--340.
  

\bibitem[11]{Fulman14}
J.~Fulman, \emph{{H}all-{L}ittlewood polynomials and {C}ohen-{L}enstra
  heuristics for {J}acobians of random graphs}, to appear in Ann. Comb.  arXiv:1403.0473,
  2014.

\bibitem[12]{FriedmanWashington89}
E.~Friedman and L.~Washington, \emph{On the distribution of divisor class
  groups of curves over a finite field}, Th\'eorie des nombres ({Q}uebec, {PQ},
  1987), de Gruyter, Berlin, 1989, pp.~227--239.

\bibitem[13]{GJRWW}
L.~Gaudet, D.~Jensen, D.~Ranganathan, N.~Wawrykow, and T.~Weisman,
  \emph{Realization of groups with pairing as {J}acobians of finite graphs},
  preprint, arXiv:1410.5144, 2014.

\bibitem[14]{Lorenzini89}
D.~Lorenzini, \emph{Arithmetical graphs}, Math. Ann. \textbf{285} (1989),
  no.~3, 481--501.

\bibitem[15]{MacWilliams69}
J.~MacWilliams, \emph{Orthogonal matrices over finite fields}, Amer. Math.
  Monthly \textbf{76} (1969), 152--164.

\bibitem[16]{Miranda84}
R.~Miranda, \emph{Nondegenerate symmetric bilinear forms on finite abelian
  2-groups}, Trans. Amer. Math. Soc. \textbf{284} (1984), no.~2, 535--542.

\bibitem[17]{Shokrieh10}
F.~Shokrieh, \emph{The monodromy pairing and discrete logarithm on the
  {J}acobian of finite graphs}, J. Math. Cryptol. \textbf{4} (2010), no.~1,
  43--56.
  
  \bibitem[18]{Stanley12}
  R.~Stanley, \emph{Enumerative combinatorics. Volume 1}, second ed.,
  Cambridge Studies in Advanced Mathematics, 49.
  Cambridge University Press, Cambridge, 2012.

\bibitem[19]{Wagner00}
D.~Wagner, \emph{The critical group of a directed graph}, preprint,
  arXiv:math/0010241, 2000.

\bibitem[20]{Wall64}
C.~T.~C. Wall, \emph{Quadratic forms on finite groups, and related topics},
  Topology \textbf{2} (1964), 281--298.

\bibitem[21]{Wood14}
M.~Wood, \emph{The distribution of sandpile groups of random graphs}, preprint,
  arXiv:1402.5149, 2014.

\end{thebibliography}
\end{document}